\documentclass[fms,times]{cuparticle}
\usepackage{latexsym,amssymb}
\usepackage{amsmath, amscd, graphics, xypic, mathrsfs, setspace, bm, enumitem}

\newcommand{\Spec}{\operatorname{Spec}}

\newcommand{\isomt}{{\stackrel{{\scriptscriptstyle{\sim}}}{\;\rightarrow\;}}}

\renewcommand{\O}{{\mathcal O}}

\newcommand{\cplx}{{\mathbb C}}
\newcommand{\Q}{{\mathbb Q}}
\newcommand{\Z}{{\mathbb Z}}

\newcommand{\aone}{{\mathbb A}^1}
\newcommand{\pone}{{\mathbb P}^1}

\newcommand{\gm}[1]{{{\mathbf G}_{m}^{#1}}}
\renewcommand{\L}{{\mathcal L}}

\newcommand{\bpi}{\bm{\pi}}


\newcommand{\K}{{{\mathbf K}}}

\newcounter{intro}
\setcounter{intro}{1}

\newtheorem{thm}{Theorem}[subsection]
\newtheorem{lem}[thm]{Lemma}
\newtheorem{cor}[thm]{Corollary}
\newtheorem{prop}[thm]{Proposition}

\newtheorem{conj}[thm]{Conjecture}

\newtheorem{thmintro}{Theorem}

\newtheorem{questionintro}[thmintro]{Question}

\newdefinition{definition}{Definition}
\newdefinition{ex}[thm]{Example}
\newdefinition{rem}[thm]{Remark}
\newdefinition{remintro}[thmintro]{Remark}

\newproof{proof}{Proof}

\volume{??}
\doi{??}

\begin{document}

\authorheadline{Aravind Asok, Jean Fasel and Michael J. Hopkins}
\runningtitle{Obstructions to algebraizing topological vector bundles}

\begin{frontmatter}

\title{Obstructions to algebraizing topological vector bundles}
\author[1]{A.~Asok}

\address[1]{Department of Mathematics, University of Southern California, 3620 S. Vermont Ave.,
  Los Angeles, CA 90089-2532, United States \ead{asok@usc.edu}}

\author[2]{J.~Fasel}

\address[2]{Institut Fourier - UMR 5582, Universit\'e Grenoble Alpes CS 40700, 38058 Grenoble Cedex 09; France \ead{jean.fasel@gmail.com}}

\author[3]{M.J.~Hopkins}

\address[3]{Department of Mathematics, Harvard University, One Oxford Street, Cambridge, MA 02138, United States \ead{mjh@math.harvard.edu}}

\received{12 October 2015}

\begin{abstract}
Suppose $X$ is a smooth complex algebraic variety.  A necessary condition for a complex topological vector bundle on $X$ (viewed as a complex manifold) to be algebraic is that all Chern classes must be algebraic cohomology classes, i.e., lie in the image of the cycle class map.  We analyze the question of whether algebraicity of Chern classes is sufficient to guarantee algebraizability of complex topological vector bundles.  For affine varieties of dimension $\leq 3$, it is known that algebraicity of Chern classes of a vector bundle guarantees algebraizability of the vector bundle.  In contrast, we show in dimension $\geq 4$  that algebraicity of Chern classes is insufficient to guarantee algebraizability of vector bundles.  To do this, we construct a new obstruction to algebraizability using Steenrod operations on Chow groups.  By means of an explicit example, we observe that our obstruction is non-trivial in general.
\end{abstract}
\MSC[2010]{14F42, 32L05, 55R25, 13C10}

\end{frontmatter}
\section{Introduction}
Suppose $X$ is a smooth complex algebraic variety.  We write $X^{an}$ for $X(\cplx)$ viewed as a complex manifold.  Write $\mathscr{V}_n(X)$ for the set of isomorphism classes of rank $n$ algebraic vector bundles on $X$, $\mathscr{V}^{an}(X)$ for the set of isomorphism classes of rank $n$ analytic vector bundles on $X^{an}$ and $\mathscr{V}^{top}(X)$ for the set of isomorphism classes of rank $n$ complex topological vector bundles on $X^{an}$.

For any integer $n \geq 0$, the assignment $X \mapsto X^{an}$ gives rise to a sequence of functions
\[
\mathscr{V}_n(X) \longrightarrow \mathscr{V}^{an}(X) \longrightarrow \mathscr{V}^{top}(X).
\]
An element of $\mathscr{V}^{top}(X)$ that lies in the image of the composite map $\mathscr{V}_n(X) \longrightarrow \mathscr{V}^{top}(X)$ will be called an {\em algebraizable} vector bundle.  The motivating problem of this paper is: characterize algebraizable vector bundles among topological vector bundles.  This problem is very old; it is studied explicitly for projective varieties of small dimension, for example, in work of Schwarzenberger \cite{Schwarzenberger}, Atiyah--Rees \cite{AtiyahRees} and B{\u{a}}nic{\u{a}}--Putinar \cite{BanicaPutinar}.  Very little is known about this problem for varieties of dimension $\geq 4$.

Suppose $\mathcal{E}^{top} \to X^{an}$ is a complex topological vector bundle.  If $\mathcal{E}^{top}$ is algebraizable, then the Chern classes $c_i^{top}(\mathcal{E}^{top}) \in H^{2i}(X^{an},\Z)$ of $\mathcal{E}^{top}$ are algebraic, i.e., they lie in the image of the cycle class map
\[
cl: CH^i(X) \longrightarrow H^{2i}(X^{an},\Z).
\]
Using this observation, one can show that the map $\mathscr{V}_n(X) \longrightarrow \mathscr{V}^{top}(X)$ is, in general, neither injective nor surjective.  If $X$ is a smooth affine variety, then by Grauert's Oka-principle (\cite[\S 2 Satz I, II]{Grauert} or \cite[Theorem 7.2.1]{Forstneric} for a textbook treatment), every topological vector bundle on $X^{an}$ admits a unique analytic structure, i.e., the map $\mathscr{V}^{an}(X) \to \mathscr{V}^{top}(X)$ is a bijection.  The following question is a concrete form of the problem stated above.

\begin{questionintro}
\label{questionintro:griffiths}
If $X$ is a smooth complex affine variety, and $\mathcal{E}^{an} \to X^{an}$ is a complex analytic vector bundle with algebraic Chern classes, then is $\mathcal{E}^{an}$ is algebraizable?
\end{questionintro}

Question \ref{questionintro:griffiths} has a positive answer in small dimensions.  Serre's splitting theorem \cite[Th{\'e}or{\`e}me 1]{Serre} implies that any algebraic vector bundle of rank $r > \dim X$ on a smooth affine variety can be written as the direct sum of a vector bundle of rank $\leq \dim X$ and a trivial bundle (note: the smoothness hypothesis is unnecessary to apply Serre's result).  To answer Question \ref{questionintro:griffiths}, it therefore suffices to establish that topological vector bundles of rank below the dimension with algebraic Chern classes are algebraizable.

For vector bundles of rank $1$ on smooth affine varieties of any dimension, since $CH^1(X) = Pic(X)$ algebraicity of the Chern classes essentially by definition guarantees algebraizability; it follows immediately that Question \ref{questionintro:griffiths} has a positive answer for varieties of dimension $1$.  In dimension $2$, a positive answer to Question \ref{questionintro:griffiths} follows from the work of Murthy and Swan \cite{MurthySwan} who show that if $X$ is a smooth complex affine surface, then for any pair $(c_1,c_2) \in CH^1(X) \times CH^2(X)$ there is an algebraic vector bundle of rank $2$ with those Chern classes.  Similarly, in dimension $3$, a positive answer to Question \ref{questionintro:griffiths} follows from work of Mohan Kumar and Murthy \cite[Theorem 2.1]{MohanKumarMurthy}.  In particular, Mohan Kumar and Murthy established existence of algebraic vector bundles of rank $\leq 3$ with arbitrary prescribed Chern classes on smooth affine threefolds.

The main result of this paper is that Question \ref{questionintro:griffiths} admits a negative answer in the first unknown case: rank $2$ vector bundles on smooth complex affine varieties of dimension $\geq 4$.  To see this, we will give a necessary and sufficient condition for algebraizability of rank $2$ bundles on smooth complex affine $4$-folds involving the integral Steenrod squaring operation $Sq^2$ on Chow groups; this operation was described by Voevodsky \cite{VRed} and Brosnan \cite{Brosnan} though for our purposes the latter (more elementary) description is sufficient.  We then show, by means of explicit examples, that the necessary and sufficient condition for algebraizability we write down is not always satisfied. More precisely, we establish the following results (the first result is established just after Theorem \ref{thm:main} in the body of the text).

\begin{thmintro}
\label{thmintro:maintheorem}
Suppose $X$ is a smooth complex affine variety of dimension $4$, and $\mathcal{E}^{an} \to X^{an}$ is a rank $2$ complex analytic vector bundle with Chern classes $c_i^{top} \in H^{2i}(X^{an},\Z)$.  Assume the Chern classes $c_i^{top}$ of $\mathcal{E}^{an}$ are algebraic, i.e., lie in the image of the cycle class map $cl$.  The bundle $\mathcal{E}^{an}$ is algebraizable if and only if we may find $(c_1,c_2) \in CH^1(X) \times CH^2(X)$ with $(cl(c_1),cl(c_2)) = (c_1^{top},c_2^{top})$ such that $Sq^2c_2 + c_1 \cup c_2 = 0 \in CH^3(X)/2$; .
\end{thmintro}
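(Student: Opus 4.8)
The plan is first to reduce the statement to a purely algebraic realizability question. Since $X$ is smooth affine of dimension $4$, the Andreotti--Frankel theorem shows that $X^{an}$ has the homotopy type of a CW complex of real dimension $\leq 4$. On such a complex the Postnikov tower of $BU(2)$ has only the homotopy groups $\pi_2 = \Z$ and $\pi_4 = \Z$ below degree $5$, recording $c_1$ and $c_2$; all higher obstructions vanish for dimensional reasons, so a rank $2$ complex topological vector bundle is determined up to isomorphism by the pair $(c_1^{top},c_2^{top}) \in H^2(X^{an},\Z)\times H^4(X^{an},\Z)$. Because the complex realization of an algebraic bundle has Chern classes equal to the images of its algebraic Chern classes under the cycle class map, it follows that $\mathcal{E}$ is algebraizable if and only if there is an algebraic rank $2$ bundle on $X$ whose Chern classes $(c_1,c_2) \in CH^1(X)\times CH^2(X)$ map to $(c_1^{top},c_2^{top})$. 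Thus the theorem is equivalent to the assertion that a pair $(c_1,c_2)$ is realized as the Chern classes of an algebraic rank $2$ bundle if and only if $Sq^2c_2 + c_1\cup c_2 = 0 \in CH^3(X)/2$.

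I would establish necessity by a formal computation with Voevodsky's operations. By the motivic splitting principle, write $c(\mathcal{E}) = \prod(1+x_i)$ with $x_i \in CH^1(X)/2$ the reductions of the Chern roots. On $H^{2,1} = CH^1/2$ the operation $Sq^2$ is the squaring operation and $Sq^1$ is the mod $2$ Bockstein, which vanishes on reductions of integral (Picard) classes; combining this with the motivic Cartan formula yields the motivic Wu formula $Sq^2c_2 = c_1\cup c_2 + c_3 \pmod 2$. For a rank $2$ bundle $c_3 = 0$, so $Sq^2c_2 + c_1\cup c_2 = 0$, which gives the ``only if'' direction and simultaneously explains why the obstruction is exactly this expression.

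For sufficiency I would use the $\A^1$-homotopy classification of vector bundles: by affine representability one has $\mathscr{V}_2(X) = [X,B\GL_2]_{\A^1}$, and after splitting off the determinant the problem becomes the analysis of the $\A^1$-Moore--Postnikov tower of $B\SL_2$ over a fixed line bundle $L$ with $c_1(L)=c_1$. Since $\piaone_2(B\SL_2) = \K^{\MW}_2$, the primary datum is an Euler class $\widetilde{c}_2 \in \widetilde{CH}^2(X,L)$ lifting $c_2$ along $\widetilde{CH}^2 \to CH^2$. On a fourfold the higher stages contribute obstructions in $H^{\geq 5}$, which vanish, so the only remaining stage involves $\piaone_3(B\SL_2) = \piaone_2(\A^2\setminus 0)$: there is a single $k$-invariant whose vanishing is necessary and sufficient to realize $\widetilde{c}_2$ by an actual bundle. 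Using the computation of this homotopy sheaf, its receiving cohomology group is identified with $CH^3(X)/2$, and the $k$-invariant is computed to be precisely the image of $c_2$ under $Sq^2 + c_1\cup(-)\colon CH^2/2 \to CH^3/2$. Hence, for a suitable choice of lift with $Sq^2c_2 + c_1\cup c_2 = 0$, a rank $2$ bundle with Chern classes $(c_1,c_2)$ exists, and by the first paragraph it realizes $\mathcal{E}$.

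The hard part will be the identification of this $k$-invariant with the motivic Steenrod square. Concretely, one must relate the $\A^1$-Postnikov $k$-invariant of $B\SL_2$ --- a class in the cohomology of the motivic Eilenberg--MacLane space $K(\K^{\MW}_2,2)$ with coefficients in $\piaone_2(\A^2\setminus 0)$ --- to Voevodsky's operation $Sq^2\colon CH^2/2 \to CH^3/2$, and to pin down the determinant-twist correction as the term $c_1\cup c_2$; consistency with the Wu formula of the second paragraph is a useful check on the answer. A subsidiary technical point is to guarantee that $c_2$ admits a Chow--Witt refinement $\widetilde{c}_2$, which requires controlling the cokernel of $\widetilde{CH}^2(X,L)\to CH^2(X)$ on a smooth affine fourfold. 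These two inputs --- the $k$-invariant computation and the surjectivity of the Euler class refinement --- are where the real work lies; everything else is formal obstruction theory in the $\A^1$-homotopy category together with the topological classification of the first paragraph.
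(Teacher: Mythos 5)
Your overall architecture matches the paper's: reduce to algebraic realizability of a pair $(c_1,c_2)$ using the fact that $(c_1^{top},c_2^{top})$ classifies rank $2$ topological bundles on a smooth affine fourfold, then run $\aone$-obstruction theory and invoke Morel's representability theorem. Your necessity argument is a genuinely different and cleaner route for that direction: the paper gets ``only if'' for free from the Moore--Postnikov formalism, whereas you derive it from the motivic Wu formula $Sq^2c_2 = c_1\cup c_2 + c_3$ via the splitting principle together with $Sq^1=0$ on reductions of Picard classes; that computation is correct (it is the same Cartan-formula manipulation the paper uses when computing $Sq^2(\xi\tau)$) and it explains transparently why the obstruction has this shape.

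The sufficiency direction, however, contains a genuine misidentification of where the obstruction lives, and the plan as written would fail. In the twisted tower over $B\gm{}$ there are two relevant steps: (i) lift $c_2 \in CH^2(X) = H^2(X,\K^M_2)$ to a class $\widetilde{c}_2 \in \CH^2(X,\L) = H^2(X,\K^{MW}_2(\L))$; (ii) lift the resulting data through the stage governed by $\bpi_3^{\aone}(BSL_2)$, whose obstruction lies in $H^4(X,\bpi_3^{\aone}(BSL_2)(\L))$. You place the $Sq^2 + c_1\cup(-)$ condition at step (ii), asserting that the receiving group ``is identified with $CH^3(X)/2$,'' and you demote step (i) to a ``subsidiary technical point'' about surjectivity of $\CH^2(X,\L)\to CH^2(X)$. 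This is exactly backwards. The sheaf $\bpi_3^{\aone}(BSL_2) \cong \bpi_2^{\aone}(\A^2\setminus 0)$ is an extension built from $\K^{Sp}_3$ and a sheaf $\mathbf{T}'_4$ (itself assembled from quotients of $\K^M_4/12$ and $\mathbf{I}^5$); its $H^4$ is not $CH^3(X)/2$, and the content of the paper's Step 2 is that this group \emph{vanishes} on a smooth affine fourfold over $\cplx$ (using divisibility of $CH^4$ and triviality of $\mathbf{I}^5$). Meanwhile $\CH^2(X,\L)\to CH^2(X)$ is \emph{not} surjective in general: the obstruction to lifting $c_2$ is the connecting homomorphism $H^2(X,\K^M_2)\to H^3(X,\mathbf{I}^3(\L))$ of $0\to\mathbf{I}^3(\L)\to\K^{MW}_2(\L)\to\K^M_2\to 0$, and it is \emph{this} class which, after the lossless reduction along $\mathbf{I}^3\to\mathbf{I}^3/\mathbf{I}^4\cong\K^M_3/2$ (lossless because $H^3(X,\mathbf{I}^4(\L))=H^4(X,\mathbf{I}^4(\L))=0$ on an affine fourfold), equals $Sq^2c_2+c_1\cup c_2\in H^3(X,\K^M_3/2)\cong CH^3(X)/2$ by the Asok--Fasel generalization of Totaro's theorem. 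In short, the step you call subsidiary \emph{is} the theorem, and the $k$-invariant you propose to identify with $Sq^2$ lands in a group that is zero; carrying out your plan would leave you trying to prove a false identification. The repair is to swap the roles of the two steps, which recovers the paper's argument.
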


\begin{remintro}
Note that, even if $c_1^{top}$ is zero in the above statement, unless the cycle class map $cl: CH^1(X) \to H^2(X,\Z)$ is injective in degree $1$ we cannot guarantee that $c_1$ can be chosen to vanish.  {\em A priori} it is possible that $Sq^2 c_2 + c_1 \cup c_2$ is always zero, but the following result shows that this is not the case.
\end{remintro}

\begin{thmintro}[See Corollary \ref{cor:mainexample}]
\label{thmintro:mainexample}
There exists a smooth hypersurface $Z$ of bidegree $(3,4)$ in $\pone \times {\mathbb P}^3$, such that, setting $X := (\pone \times {\mathbb P}^3)\setminus Z$, the following statements hold.
\begin{itemize}[noitemsep,topsep=1pt]
\item[(a)] The cycle class map $cl: CH^i(X) \to H^{2i}(X^{an},\Z)$ is injective for $i \leq 2$, and
\item[(b)] the manifold $X^{an}$ carries a rank $2$ topological vector bundle $\mathcal{E}^{an}$ with Chern classes $(0,c_2^{top})$ such that $c_2^{top}$ is algebraic and the unique lift $c_2$ of $c_2^{top}$ granted by (a) satisfies $Sq^2 c_2 \neq 0$ (in particular, $\mathcal{E}^{an}$ is not algebraizable).
\end{itemize}
\end{thmintro}

Theorem \ref{thmintro:maintheorem} is established by first observing that the map $\mathscr{V}_n(X) \to \mathscr{V}_n^{top}(X)$ factors through $[X,BGL_n]_{\aone}$, i.e., the set of $\aone$-homotopy classes of maps $X$ to $BGL_n$ (the set $[X,BGL_n]_{\aone}$ has been called the set of {\em motivic vector bundles on $X$} by the authors).  Thus, to produce the obstruction, it suffices to obstruct existence of an $\aone$-homotopy class of maps, i.e., to obstruct existence of a ``motivic" lift of a given homotopy class.  This is accomplished by analysis of the Moore--Postnikov factorization of the map assigning to the universal rank $2$ vector bundle over the Grassmannian its Chern classes; these ideas are discussed in Section \ref{ss:obstructiontheory}.  The primary obstruction to existence of a lifting yields the condition of the statement, and we establish a vanishing theorem showing that, under suitable hypotheses, all higher obstructions vanish.  To prove necessity of the vanishing of the obstruction, we appeal to Morel's vector bundle classification: if $X$ is smooth and affine, then $[X,BGL_n]_{\aone} \cong \mathscr{V}_n(X)$ (see \cite[Theorem 1]{AsokHoyoisWendt}).

The construction of the example in Theorem \ref{thmintro:mainexample} is closely related with the failure of the integral Hodge conjecture for the hypersurface $Z$.  The failure of injectivity of the cycle class map $CH^3(X) \to H^{6}(X^{an},\Z)$ is precisely what allows the examples above to exist.  In Section \ref{ss:noritotaro}, we explain how some general conjectures on algebraic cycles suggest examples like those above are ``generic".  By considering products of the form $X \times {\mathbb A}^n$ with $X$ as in Theorem \ref{thmintro:mainexample}, one may construct examples of non-algebraizable topological vector bundles with algebraic Chern classes in any dimension $\geq 4$.

\begin{remintro}
Theorem \ref{thmintro:mainexample} also provides a counterexample to a related K-theoretic variant of Question \ref{questionintro:griffiths}.  Indeed, for any smooth $\cplx$-scheme $X$, we may consider the Grothendieck groups $K_0(X)$ and $K_0^{top}(X^{an})$.  The functions $\mathscr{V}_n(X) \to \mathscr{V}_n^{top}(X^{an})$ (for varying $n$) induce a function $K_0(X) \to K_0^{top}(X)$; we will say that a class in $K_0^{top}(X)$ is algebraic if it lies in the image of this map.  One might ask whether topological vector bundles whose associated K-theory class is algebraic themselves admit algebraic structures.

Since Chern classes factor through K-theory, algebraicity of the topological K-theory class of a vector bundle is a stronger restriction than algebraicity of Chern classes.  \cite[Theorem 2.1]{MohanKumarMurthy} essentially shows that the K-theoretic variant of Question \ref{questionintro:griffiths} admits a positive solution for smooth affine $\cplx$-schemes of dimension $\leq 3$.  On the other hand, the proof of Proposition \ref{prop:relevanthomotopysheaves} shows that the restriction on Chern classes appearing in Theorem \ref{thmintro:maintheorem} is precisely the primary obstruction to building a rank $2$ vector bundle on a smooth affine $4$-fold given a fixed class in $K_0(X)$.  Thus in dimension $\geq 4$, the K-theoretic variant of Question \ref{questionintro:griffiths} mentioned in the previous paragraph also admits a negative solution.
\end{remintro}

\subsubsection*{Acknowledgements}
The first named author would like to thank Burt Totaro for some helpful correspondence as regards the integral Hodge conjecture on threefolds and for pointing out the paper \cite{Totaro}.  Asok was supported by National Science Foundation Award DMS-1254892.  The third-named author would like to thank Philip Griffiths for correspondence related to the paper \cite{Griffiths}, which was a source of inspiration for the questions considered here.  Hopkins was supported by National Science Foundation Award DMS-0906194. Finally, we thank Oliver R\"ondigs for some comments on the first version of this paper, and the referees for some helpful comments and questions.

\section{Obstruction theory: proof of Theorem \ref{thmintro:maintheorem}}
The goal of this section is to prove Theorem \ref{thmintro:maintheorem}.  We begin with some preliminaries regarding Chern classes and obstruction theory in $\aone$-homotopy theory.  We will use rather freely the terminology of \cite{MV,MField}; rather than bulking up this paper with a long section of notation and terminology, we have chosen to focus on the argument; we will follow the notations and conventions of \cite[\S 2.1]{AsokFaselA3minus0} and we refer the reader there for terminology not explicitly defined here (e.g., spaces, homotopy sheaves, strong and strict $\aone$-invariance of sheaves and their basic properties).  When we consider cohomology of a sheaf on a smooth scheme, we mean cohomology in the Nisnevich site.  We also remind the reader that strongly or strictly $\aone$-invariant sheaves are {\em unramified} and that to check a morphism of unramified sheaves is an isomorphism, it suffices to check this on sections over finitely generated extensions of the base field.  For a general discussion of the Moore--Postnikov factorization in $\aone$-homotopy theory, we refer the reader to \cite[\S 6]{AsokFaselA3minus0}.

\subsection{Chern classes and the basic obstruction theory problem}
\label{ss:obstructiontheory}
Write $BGL_n$ for the simplicial classifying space of $GL_n$ (see, e.g., \cite[\S 4.1]{MV}).  The determinant map $GL_n \to \gm{}$ induces a map of classifying spaces $BGL_n \to B\gm{}$.  If $X \to BGL_n$ is a simplicial homotopy class of maps representing a rank $n$ vector bundle on a smooth scheme $X$, then the composite map $X \to BGL_n \to B\gm{}$ represents the determinant line bundle of this vector bundle.  Some $\aone$-homotopy sheaves of $BGL_n$ were computed in \cite[Theorem 7.20]{MField}; we refer the reader to \cite[Lemma 3.1, Theorem 3.2]{AsokFaselSpheres} for convenient references in the form we require.

\begin{lem}
\label{lem:lowdegreehomotopysheavesofgln}
There are canonical isomorphisms $\bpi_1^{\aone}(BGL_n) \isomt \gm{}$ induced by the determinant and $\bpi_2^{\aone}(BGL_2) \cong \K^{MW}_2$.
\end{lem}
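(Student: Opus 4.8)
The plan is to reduce the computation of the low-degree $\aone$-homotopy sheaves of $BGL_n$ to the corresponding sheaves of the group $GL_n$ itself, and then to compute the latter by combining the determinant with the standard $\aone$-models for $SL_2$. The key structural input is the $\aone$-fiber sequence obtained by $\aone$-localizing the simplicial universal bundle $GL_n \to EGL_n \to BGL_n$ (with $EGL_n$ simplicially, hence $\aone$-, contractible): this exhibits $BGL_n$ as $\aone$-connected with $\Omega^1_{\aone}BGL_n \simeq GL_n$, which is exactly the content recorded in \cite[Theorem 7.20]{MField} (and in the convenient form of \cite[Lemma 3.1, Theorem 3.2]{AsokFaselSpheres}). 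Granting this, one has canonical identifications $\bpi_1^{\aone}(BGL_n) \cong \bpi_0^{\aone}(GL_n)$ and $\bpi_2^{\aone}(BGL_2) \cong \bpi_1^{\aone}(GL_2)$, so it suffices to treat the right-hand sides.

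For the first isomorphism I would exploit the scheme isomorphism $GL_n \cong SL_n \times \gm{}$ given by $A \mapsto (A\cdot\mathrm{diag}(\det(A)^{-1},1,\dots,1),\,\det A)$, under which the determinant corresponds to projection onto the second factor. Since $\aone$-homotopy sheaves commute with finite products, this yields $\bpi_i^{\aone}(GL_n) \cong \bpi_i^{\aone}(SL_n) \times \bpi_i^{\aone}(\gm{})$. Now $SL_n$ is $\aone$-connected by Morel (the case $n=1$ is trivial, as $GL_1 = \gm{}$) and $\gm{}$ is $\aone$-rigid, so $\bpi_0^{\aone}(GL_n) \cong \gm{}$, the isomorphism being induced by $\det$. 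Unwinding the delooping, this is precisely the map on $\bpi_1^{\aone}$ induced by $BGL_n \to B\gm{}$, giving the asserted canonical $\bpi_1^{\aone}(BGL_n) \isomt \gm{}$.

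For the second isomorphism, the same product decomposition gives $\bpi_1^{\aone}(GL_2) \cong \bpi_1^{\aone}(SL_2) \times \bpi_1^{\aone}(\gm{}) = \bpi_1^{\aone}(SL_2)$, using $\bpi_i^{\aone}(\gm{}) = 0$ for $i \geq 1$. It then remains to identify $\bpi_1^{\aone}(SL_2)$. Here I would use the first-column projection $SL_2 \to \A^2 \setminus 0$, whose fibers are affine spaces (torsors under $\ga$); this is a Zariski-locally trivial $\aone$-bundle, hence an $\aone$-weak equivalence, so $\bpi_1^{\aone}(SL_2) \cong \bpi_1^{\aone}(\A^2 \setminus 0)$. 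Finally, $\A^2 \setminus 0$ is $\aone$-connected and its first nonvanishing homotopy sheaf is $\bpi_1^{\aone}(\A^2 \setminus 0) \cong \K^{MW}_2$ by Morel's computation of $\bpi_{n-1}^{\aone}(\A^n\setminus 0)$. Chaining the three identifications produces $\bpi_2^{\aone}(BGL_2) \cong \K^{MW}_2$.

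The main obstacle is not any individual calculation, each of which is either formal or available in the literature, but the bookkeeping at the delooping step. Because algebraic vector bundles are not $\aone$-invariant, $GL_n$ fails to be a strongly $\aone$-invariant sheaf of groups, so the simplicial bar construction $BGL_n$ is not itself $\aone$-local; one must therefore verify that $\aone$-localization preserves the universal fibration in the low degrees at issue, so that $\Omega^1_{\aone}BGL_n \simeq GL_n$ and the shift $\bpi_i^{\aone}(BGL_n) \cong \bpi_{i-1}^{\aone}(GL_n)$ are legitimate. This is exactly what the cited results of Morel and of Asok--Fasel supply, and I would invoke them rather than reprove them; the determinant splitting and the equivalence $SL_2 \simeq \A^2 \setminus 0$ then make both identifications explicit and manifestly natural.
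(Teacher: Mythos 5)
Your argument is correct and is, in substance, exactly what the paper relies on: the paper gives no proof of this lemma, deferring entirely to Morel's computation and to the restatements in the cited Asok--Fasel reference, and your sketch (deloop via the universal $GL_n$-bundle, split off $\gm{}$ by the determinant, identify $SL_2 \simeq_{\aone} \A^2\setminus 0$ and invoke $\bpi_1^{\aone}(\A^2\setminus 0)\cong \K^{MW}_2$) is the standard content of those references. You correctly isolate the only delicate point --- that $\aone$-localization must preserve the simplicial fiber sequence $GL_n \to EGL_n \to BGL_n$ --- and it is legitimate to invoke the cited results for that step.
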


The motivic cohomology of $BGL_n$ is a polynomial algebra over the motivic cohomology of a point in variables $c_1,\ldots,c_n$ where $c_i$ has bidegree $(2i,i)$ (this is ``well-known", but see, e.g., \cite[Proposition 2]{PushinChern} for a precise statement; Pushin's argument is a version of the argument of \cite{GilletChern}, which itself goes back to Grothendieck's axiomatic treatment of Chern classes).  By Voevodsky's (unstable) $\aone$-representability of motivic cohomology \cite[\S 2.3 Theorem 2]{DeligneVoevodsky}, each $c_i$ corresponds to an $\aone$-homotopy class of maps
\[
c_i: BGL_n \to K(\Z(i),2i).
\]
There is an $\aone$-weak equivalence $Gr_n \to BGL_n$, where $Gr_n$ is the infinite Grassmannian \cite[\S4 Proposition 3.7]{MV}. Here and henceforth, we can fix suitable $\aone$-fibrant models of $BGL_n$ and $K(\Z(i),2i)$ so that the $\aone$-homotopy classes of maps above are represented by actual morphisms of spaces $Gr_n \to K(\Z(i),2i)$.

Morel and Voevodsky introduced a complex realization functor \cite[\S 3.3]{MV}.  Under complex realization $Gr_n$ is sent to the usual complex Grassmannian.  The model of $K(\Z(i),2i)$ in terms of effective cycles \cite[\S 6]{VZeroslice} and the classical Dold--Thom theorem show that the complex realization of $K(\Z(i),2i)$ is $K(\Z,2i)$.  Moreover, the maps $c_i: Gr_n \to K(\Z(i),2i)$ are sent by realization to the usual Chern class maps $c_i^{top}: Gr_n \to K(\Z,2i)$.  Indeed, this observation is a consequence of (1) the fact that the finite-dimensional Grassmannian varieties $Gr_{n,N}$ admit a cellular decomposition \cite[Example 1.9.1]{Fulton} and thus the cycle class map from Chow groups to ordinary cohomology is an isomorphism (see also \cite[Proposition 4.4]{DuggerIsaksenCellular} for a more homotopic statement), (2) the fact that $Gr_n$ is a filtered colimit of the finite-dimensional Grassmannian varieties $Gr_{n,N}$ by construction, and (3) the fact that motivic cohomology of $Gr_{n,N}$ in any given degree stabilizes for $N$ large enough, e.g., by Totaro's argument \cite[Definition-Proposition 1 and \S 2.7]{EdidinGraham}.  We use the compatibilities mentioned above without mention in the sequel.

The space $K(\Z(n),2n)$ is not an Eilenberg--Mac Lane space in the sense that its $\aone$-homotopy sheaves are not, in general, concentrated in a single degree.  Nevertheless, one may identify the $\aone$-homotopy sheaves of $K(\Z(n),2n)$.  The first statement is simply a reformulation of the $\aone$-representability of motivic cohomology mentioned above, while the second statement is a reformulation of the Nesterenko--Suslin--Totaro theorem (see, e.g., \cite[Theorem 5.1]{MVW}).

\begin{lem}
\label{lem:homotopysheavesofeilenbergmaclanespaces}
There is a canonical isomorphism $\bpi_i^{\aone}(K(\Z(n),2n)) \cong {\mathbf H}^{2n-i,n}$, where $\mathbf{H}^{2n-i,n}$ is the sheafification of the presheaf $U \mapsto H^{2n-i,n}(U,\Z)$.  In particular, $K(\Z(n),2n)$ is $\aone$-$(n-1)$-connected and its first non-vanishing $\aone$-homotopy sheaf is $\K^M_n$, the $n$-th unramified Milnor K-theory sheaf.
\end{lem}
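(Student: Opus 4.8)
The plan is to reduce the statement to Voevodsky's representability theorem together with the computation of the motivic cohomology of fields, both of which are cited above. First I would recall that, for a pointed space $\mathcal{Y}$, the sheaf $\bpi_i^{\aone}(\mathcal{Y})$ is the Nisnevich sheaf associated to the presheaf $U \mapsto [S^i_s \wedge U_+, \mathcal{Y}]_{\aone}$, where $S^i_s$ denotes the $i$-fold smash power of the simplicial circle $S^1_s$ and $[-,-]_{\aone}$ denotes morphisms in the pointed $\aone$-homotopy category. Applied to $\mathcal{Y} = K(\Z(n),2n)$, the adjunction between simplicial suspension and looping gives $[S^i_s \wedge U_+, K(\Z(n),2n)]_{\aone} \cong [U_+, \Omega^i_s K(\Z(n),2n)]_{\aone}$, and simplicial looping lowers the simplicial degree while preserving the weight, so that $\Omega^i_s K(\Z(n),2n) \simeq K(\Z(n), 2n-i)$. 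By Voevodsky's representability of motivic cohomology, the right-hand presheaf is precisely $U \mapsto H^{2n-i,n}(U,\Z)$. Sheafifying yields the canonical isomorphism $\bpi_i^{\aone}(K(\Z(n),2n)) \cong \mathbf{H}^{2n-i,n}$ asserted in the first part.

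For the connectivity statement I would argue that $\mathbf{H}^{2n-i,n}$ vanishes for $i \leq n-1$. The sheaves $\mathbf{H}^{p,n}$ are strictly $\aone$-invariant, hence unramified, so by the criterion recalled at the start of this section it suffices to evaluate on finitely generated field extensions $F/k$. There one has $H^{p,n}(\Spec F, \Z) = 0$ whenever $p > n$, since $\Z(n)$ is a complex of Nisnevich sheaves concentrated in cohomological degrees $\leq n$ and $\Spec F$ has Nisnevich cohomological dimension $0$. For $i \leq n-1$ we have $2n-i \geq n+1 > n$, so $\mathbf{H}^{2n-i,n}$ vanishes on all such $F$ and is therefore the zero sheaf; this gives the $\aone$-$(n-1)$-connectivity.

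Finally, to identify the first non-vanishing homotopy sheaf, I would specialize to $i = n$, so that $\bpi_n^{\aone}(K(\Z(n),2n)) \cong \mathbf{H}^{n,n}$. By the Nesterenko-Suslin-Totaro theorem, $H^{n,n}(\Spec F, \Z) \cong K^M_n(F)$ naturally in $F$, so the unramified sheaf $\mathbf{H}^{n,n}$ and the unramified Milnor K-theory sheaf $\K^M_n$ have canonically isomorphic sections over every finitely generated field extension; since both are unramified, the criterion above upgrades this to an isomorphism of sheaves $\mathbf{H}^{n,n} \cong \K^M_n$.

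I would expect the genuinely formal part to be the homotopy-theoretic bookkeeping -- the passage through simplicial looping and sheafification -- while the substantive inputs are imported wholesale: representability supplies the presheaf identification, and the structure of the motivic complex together with Nesterenko-Suslin-Totaro supplies the vanishing above the weight and the identification in degree $n$. The only point demanding care is the repeated use of strict $\aone$-invariance and unramifiedness to pass from statements about field-valued sections to statements about sheaves, which is exactly what licenses both the connectivity claim and the final identification with $\K^M_n$.
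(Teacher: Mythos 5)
Your argument is correct and follows the same route the paper takes: the first claim is the representability theorem (unwound through simplicial looping and sheafification), the connectivity comes from the vanishing of $H^{p,n}$ above the weight on fields, and the identification in degree $n$ is Nesterenko--Suslin--Totaro, all mediated by strict $\aone$-invariance and unramifiedness exactly as the paper's conventions prescribe. The paper itself records only the two citations and calls the lemma a ``reformulation'' of them, so your write-up supplies precisely the bookkeeping the authors omit.
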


Ideally, we would study the existence of a vector bundle of rank $2$ with given Chern classes $(c_1,c_2)$ by studying an obstruction theory problem deduced from the Moore--Postnikov factorization of the map $(c_1,c_2): BGL_2 \to K(\Z(1),2) \times K(\Z(2),4)$.  The fact that $K(\Z(n),2n)$ is not an Eilenberg--Mac Lane space causes various technical complications and we use the description of its homotopy sheaves provided above to produce an equivalent yet technically simpler problem.

The first stage of the $\aone$-Postnikov tower of $K(\Z(n),2n)$ yields, by appeal to Lemma \ref{lem:homotopysheavesofeilenbergmaclanespaces}, a canonical map $K(\Z(n),2n) \to K(\K^M_n,n)$.  In particular, composition of the universal $n$-th Chern class with this map yields a map:
\begin{equation}
\label{eqn:kcohomologychernclasses}
c_n': BGL_r \to K(\K^M_n,n)
\end{equation}
for any $r \geq n$; this is a modified version of the universal $n$-th Chern class.

\begin{rem}
\label{rem:firstchern}
Since $\K^M_1 = \gm{}$ and since the motivic complex $\Z(1)$ is $\gm{}$ concentrated in a single degree \cite[Theorem 4.1]{MVW}, the map $K(\Z(1),2) \to K(\K^M_1,1)$ induced by the Postnikov tower is a simplicial weak equivalence.  In particular, the maps $c_1$ and $c_1'$ lie in the same simplicial homotopy class.  Explicitly identifying the homotopy sheaves of $\bpi_i^{\aone}(K(\Z(n),2n))$ in all degrees seems at the moment intractable: the Beilinson--Soul\'e vanishing conjecture predicts that $K(\Z(n),2n)$ is $\aone$-$(2n-1)$-truncated, i.e., its homotopy sheaves vanish for $i > 2n-1$.
\end{rem}

\subsection{Homotopy sheaves and Moore--Postnikov factorizations}
\label{ss:identifyingobstructions}
Consider the map
\[
(c_1',c_2'): BGL_2 \to K(\K^M_1,1) \times K(\K^M_2,2),
\]
where $c_1'$ and $c_2'$ are the maps mentioned in \ref{eqn:kcohomologychernclasses}.  Our goal is to analyze the homotopy fiber of this map.  Write $\mathscr{F}_2$ for the $\aone$-homotopy fiber of $(c_1',c_2')$.

By Lemma \ref{lem:lowdegreehomotopysheavesofgln} there is a canonical identification $\bpi_1^{\aone}(BGL_2) \cong \gm{}$ and the map $c_1'$ yields an isomorphism of $\aone$-fundamental sheaves of groups (see also Remark \ref{rem:firstchern}).  As a consequence, we will be considering a twisted obstruction theory problem and we will have to keep track of the action of $\gm{}$ on the higher $\aone$-homotopy sheaves of the $\aone$-homotopy fiber.  We begin by identifying the relevant $\aone$-homotopy sheaves of $\mathscr{F}_2$ together with the action of $\gm{}$ (for the relevant definitions about twisted $\aone$-homotopy sheaves, we refer the reader to \cite[\S 2.4]{AsokFaselA3minus0}).

\begin{prop}
\label{prop:relevanthomotopysheaves}
There are canonical isomorphisms $\bpi_2^{\aone}(\mathscr{F}_2) \cong \mathbf{I}^3$ and $\bpi_3^{\aone}(\mathscr{F}_2) \cong \bpi_3^{\aone}(BGL_2) \cong \bpi_2^{\aone}(SL_2)$.  Moreover, the actions of $\gm{}$ on these sheaves coincide with the actions described in \textup{\cite[Propositions 6.3 and 6.5]{AsokFaselThreefolds}}.
\end{prop}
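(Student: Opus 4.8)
The plan is to extract the homotopy sheaves of $\mathscr{F}_2$ from the long exact sequence of $\aone$-homotopy sheaves of the $\aone$-fiber sequence
\[
\mathscr{F}_2 \longrightarrow BGL_2 \xrightarrow{\;(c_1',c_2')\;} K(\K^M_1,1) \times K(\K^M_2,2).
\]
Writing $B$ for the target, I would first record the homotopy sheaves of $B$: since $K(\K^M_n,n)$ is the $\aone$-Eilenberg--MacLane space of the strictly $\aone$-invariant sheaf $\K^M_n$, its $\aone$-homotopy is concentrated in degree $n$, so $\bpi_1^{\aone}(B) \cong \gm{}$ (via $\K^M_1 = \gm{}$), $\bpi_2^{\aone}(B) \cong \K^M_2$, and $\bpi_i^{\aone}(B) = 0$ for $i \geq 3$.

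Next I would identify the maps that $(c_1',c_2')$ induces on low-degree homotopy sheaves. On $\bpi_1^{\aone}$ the map $c_1'$ is an isomorphism $\gm{} \isomt \gm{}$ by Lemma \ref{lem:lowdegreehomotopysheavesofgln} and Remark \ref{rem:firstchern}. The crucial point is to show that on $\bpi_2^{\aone}$ the map $c_2'$ induces the canonical quotient $q\colon \K^{MW}_2 \twoheadrightarrow \K^M_2$, whose kernel is $\mathbf{I}^3$ by Morel's fundamental exact sequence $0 \to \mathbf{I}^{n+1} \to \K^{MW}_n \to \K^M_n \to 0$ \cite{MField}. Granting this, the long exact sequence
\[
\bpi_3^{\aone}(B) \to \bpi_2^{\aone}(\mathscr{F}_2) \to \K^{MW}_2 \xrightarrow{q} \K^M_2 \to \bpi_1^{\aone}(\mathscr{F}_2) \to \gm{} \isomt \gm{}
\]
reads off at once: $\bpi_3^{\aone}(B) = 0$ and injectivity of the last map force $\bpi_1^{\aone}(\mathscr{F}_2) = 0$ and $\bpi_2^{\aone}(\mathscr{F}_2) \cong \ker q \cong \mathbf{I}^3$. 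For the next sheaf, exactness together with $\bpi_4^{\aone}(B) = \bpi_3^{\aone}(B) = 0$ yields $\bpi_3^{\aone}(\mathscr{F}_2) \cong \bpi_3^{\aone}(BGL_2)$; the isomorphism $\bpi_3^{\aone}(BGL_2) \cong \bpi_2^{\aone}(SL_2)$ then follows from the $\aone$-fiber sequence $BSL_2 \to BGL_2 \to B\gm{}$, which gives $\bpi_i^{\aone}(BGL_2) \cong \bpi_i^{\aone}(BSL_2) \cong \bpi_{i-1}^{\aone}(SL_2)$ for $i \geq 2$, as in \cite{AsokFaselSpheres}.

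For the $\gm{}$-actions I would observe that the entire long exact sequence is equivariant for the action of $\bpi_1^{\aone}(BGL_2) \cong \gm{}$. Thus the identification $\bpi_2^{\aone}(\mathscr{F}_2) \cong \mathbf{I}^3$ is realized by the $\gm{}$-equivariant fiber inclusion as the submodule $\ker q \subset \K^{MW}_2 = \bpi_2^{\aone}(BGL_2)$, so its action is the restriction of the action on $\K^{MW}_2$; likewise the equivariant isomorphism $\bpi_3^{\aone}(\mathscr{F}_2) \cong \bpi_3^{\aone}(BGL_2) = \bpi_2^{\aone}(SL_2)$ transports the action. Both actions, on $\K^{MW}_2$ and on $\bpi_2^{\aone}(SL_2)$, are exactly those computed in \cite[Propositions 6.3 and 6.5]{AsokFaselThreefolds}, giving the claim.

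I expect the main obstacle to be the second step, namely verifying that $c_2'$ induces on $\bpi_2^{\aone}(BGL_2) = \K^{MW}_2$ \emph{precisely} the canonical surjection onto $\K^M_2 = \bpi_2^{\aone}(K(\K^M_2,2))$, rather than merely some surjection with isomorphic kernel. I would pin this down using naturality of the first Postnikov stage $K(\Z(2),4) \to K(\K^M_2,2)$ (Lemma \ref{lem:homotopysheavesofeilenbergmaclanespaces}) together with the identification of $c_2$ on the first nonvanishing homotopy sheaf, checking the induced map on sections over finitely generated field extensions, where unramifiedness reduces everything to Morel's explicit description of $\K^{MW}_2 \to \K^M_2$.
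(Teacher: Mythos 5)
Your overall strategy is the same as the paper's: run the long exact sequence of $\aone$-homotopy sheaves for the fibration $\mathscr{F}_2 \to BGL_2 \to K(\K^M_1,1)\times K(\K^M_2,2)$, use the $2$-truncatedness of the target to obtain $\bpi_3^{\aone}(\mathscr{F}_2) \cong \bpi_3^{\aone}(BGL_2) \cong \bpi_2^{\aone}(SL_2)$, and realize $\bpi_2^{\aone}(\mathscr{F}_2)$ as the kernel of the induced map $\K^{MW}_2 \to \K^M_2$, to be matched with Morel's sequence $0 \to \mathbf{I}^3 \to \K^{MW}_2 \to \K^M_2 \to 0$. Your treatment of the $\gm{}$-actions (equivariance of the whole sequence, actions inherited from those on the homotopy sheaves of $BGL_2$) also agrees with the paper.

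The gap sits exactly at the step you yourself flag as the main obstacle, and your proposed fix does not close it. Saying you will check the induced map ``on sections over finitely generated field extensions, where unramifiedness reduces everything to Morel's explicit description of $\K^{MW}_2 \to \K^M_2$'' is circular: Morel's description tells you what the canonical surjection is, not that the map induced by $c_2'$ on $\bpi_2^{\aone}$ coincides with it, and naturality of the Postnikov stage $K(\Z(2),4)\to K(\K^M_2,2)$ only reduces you to identifying what $c_2$ itself does on $\bpi_2^{\aone}$ --- which is the whole point. The paper fills this in with a genuinely separate argument: identify $SL_2 \cong Sp_2$, so that $\bpi_2^{\aone}(BGL_2) \cong \bpi_2^{\aone}(BSp_2) \cong \bpi_2^{\aone}(BSp_{\infty}) \cong \mathbf{KSp}_2 \cong \K^{MW}_2$ by the stabilization result of \cite[Theorem 2.6]{AsokFaselThreefolds} and Suslin's theorem; factor $c_2'$ through $BGL_{\infty}$, where Matsumoto's theorem shows $BGL_{\infty} \to K(\K^M_2,2)$ is an isomorphism on $\bpi_2^{\aone}$; and then observe that the resulting composite $\K^{MW}_2 \cong \mathbf{KSp}_2 \to \K^M_2$ is the hyperbolic map, which is precisely the standard surjection with kernel $\mathbf{I}^3$. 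Without some such identification you only know that $\bpi_2^{\aone}(\mathscr{F}_2)$ is the kernel of \emph{some} map $\K^{MW}_2 \to \K^M_2$ (not even a priori surjective, which you also need to kill $\bpi_1^{\aone}(\mathscr{F}_2)$), and that is not enough either for the isomorphism with $\mathbf{I}^3$ or for the later computation of the primary obstruction, which uses the precise form of the connecting homomorphism.
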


\begin{proof}
Observe that $K(\K^M_1,1) \times K(\K^M_2,2)$ is $\aone$-$2$-truncated, i.e., has $\aone$-homotopy sheaves concentrated in degrees $\leq 2$.  The second assertion is therefore immediate from the isomorphism $\bpi_3^{\aone}(BGL_2) \cong \bpi_2^{\aone}(SL_2)$.

For the first statement, the long exact sequence in $\aone$-homotopy sheaves of a fibration yields (using Lemma \ref{lem:lowdegreehomotopysheavesofgln}) a short exact sequence
\[
0 \longrightarrow \bpi_2^{\aone}(\mathscr{F}_2) \longrightarrow \K^{MW}_2 \longrightarrow \K^{M}_2 \longrightarrow 0.
\]
On the other hand, recall that there is an exact sequence of sheaves of the form
\[
0 \longrightarrow \mathbf{I}^3 \longrightarrow \K^{MW}_2 \longrightarrow \K^M_2 \longrightarrow 0;
\]
this is the sheafified version of the exact sequence of \cite[Corollaire 5.4]{MorelKMW}.  We will identify these two exact sequences.

To this end, let us unwind some of the identifications.  Recall that the $\aone$-fiber sequence
\[
\gm{} \longrightarrow BSL_2 \longrightarrow BGL_2
\]
yields the identification $\bpi_2^{\aone}(BSL_2) \cong \bpi_2^{\aone}(BGL_2)$.

Identify $SL_2 \cong Sp_2$ and consider the following commutative diagram:
\[
\xymatrix{
BSp_2 \ar[r]\ar[d] & BGL_2 \ar[d] \\
BSp_{\infty} \ar[r] & BGL_{\infty}.
}
\]
The left vertical map induces an isomorphism on $\bpi_2^{\aone}$ by \cite[Theorem 2.6]{AsokFaselThreefolds} and yields the isomorphism $\bpi_2^{\aone}(BSp_2) \cong \bpi_2^{\aone}(BSp_{\infty}) \cong \mathbf{KSp}_2$.  By Suslin's theorem $\mathbf{KSp}_2 \cong \K^{MW}_2$ \cite[Corollaries 6.2,6.4 and Theorem 6.5]{Suslin87} (see \cite[Theorem 4.1.2]{AsokFaselKODegree} and the discussion there for an explanation in the context we consider).  On the other hand, the map on $\bpi_2^{\aone}$ induced by the right vertical map is a map $\K^{MW}_2 \to \K^M_2$ (see \cite[Lemma 3.1 and Theorem 3.2]{AsokFaselSpheres}).  The bottom horizontal map induces the forgetful map which coincides with the standard surjection $\K^{MW}_2 \to \K^M_2$.

The second Chern class map $BGL_2 \to K(\K^M_2,2)$ necessarily factors through the map $BGL_2 \to BGL_{\infty}$ and the map $BGL_{\infty} \to K(\K^M_2,2)$ is an isomorphism on $\bpi_2^{\aone}$ by Matsumoto's theorem identifying Quillen's $K_2$ of a field with Milnor's $K_2$.  Therefore, the second Chern class map induces a map on $\bpi_2^{\aone}$ that, up to the identifications described above, coincides with the forgetful map $\K^{MW}_2 \to \K^M_2$, which is precisely what is asserted above.

By construction, the actions mentioned in the statement are inherited from the $\gm{}$-actions on homotopy sheaves of $BGL_2$ as described in the referenced propositions.
\end{proof}

\begin{thm}
\label{thm:main}
Suppose $X$ is a smooth affine $4$-fold over an algebraically closed field having characteristic unequal to $2$, and fix a line bundle $\L$ on $X$ with first Chern class $c_1 \in CH^1(X)$.  Given $c_2 \in CH^2(X)$, the pair $(c_1,c_2)$ are the first and second Chern classes of a vector bundle of rank $2$ and determinant $\L$ if and only if $Sq^2 c_2 + c_1 \cup c_2 = 0$.
\end{thm}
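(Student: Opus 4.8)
The plan is to convert the existence question into an obstruction-theoretic lifting problem and then to read off the obstruction. By Morel's classification $[X,BGL_2]_{\aone} \cong \mathscr{V}_2(X)$ for $X$ smooth affine, a rank $2$ bundle with determinant $\L$ and second Chern class $c_2$ is the same datum as an $\aone$-homotopy class of lifts of the classifying map $f\colon X \to K(\K^M_1,1)\times K(\K^M_2,2)$ of the pair $([\L],c_2)$ along $(c_1',c_2')\colon BGL_2 \to K(\K^M_1,1)\times K(\K^M_2,2)$, where $c_1'$ sees $\L$ itself (Remark \ref{rem:firstchern}) and $c_2'$ recovers $c_2 \in CH^2(X)=H^2(X,\K^M_2)$. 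I would run the Moore--Postnikov tower of $(c_1',c_2')$, whose $\aone$-homotopy fiber is $\mathscr{F}_2$. Since $\mathscr{F}_2$ is $\aone$-$1$-connected with $\bpi_2^{\aone}(\mathscr{F}_2)\cong \mathbf{I}^3$ and $\bpi_3^{\aone}(\mathscr{F}_2)\cong \bpi_2^{\aone}(SL_2)$ (Proposition \ref{prop:relevanthomotopysheaves}), and since $\dim X = 4$ forces $H^i_{\Nis}(X,-)=0$ for $i\geq 5$, there are only two potential obstructions to a lift: a primary obstruction $\mathfrak{o}_3 \in H^3(X,\mathbf{I}^3(\L))$ and a secondary obstruction $\mathfrak{o}_4 \in H^4(X,\bpi_2^{\aone}(SL_2)(\L))$, with coefficients twisted by $\L$ through the $\gm{}$-action of Proposition \ref{prop:relevanthomotopysheaves}.

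The heart of the argument is to compute $\mathfrak{o}_3$. I would reduce modulo $\mathbf{I}^4$ using $\mathbf{I}^3/\mathbf{I}^4 \cong \K^M_3/2$, landing in $H^3(X,\K^M_3/2)$, on which the $\L$-twist is trivial. The Gersten resolution of $\K^M_3/2$ has length $3$, so $H^3(X,\K^M_3/2)\cong CH^3(X)/2$. The point making this reduction lossless is that the reduction map $r\colon H^3(X,\mathbf{I}^3(\L))\to CH^3(X)/2$ is injective: its kernel is a quotient of $H^3(X,\mathbf{I}^4(\L))$, and this group vanishes. Indeed, over the algebraically closed base the residue fields of $X$ have transcendence degree $\leq 4$, so $\bar{\mathbf{I}}^j=\K^M_j/2=0$ for $j\geq 5$; with the Arason--Pfister theorem this gives $\mathbf{I}^5=0$ on $X$ and hence $\mathbf{I}^4 \cong \K^M_4/2 \cong \mathscr{H}^4_{\et}(\Z/2)$. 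Because $X$ is affine, Artin vanishing gives $H^{>4}_{\et}(X,\Z/2)=0$, and then the Bloch--Ogus coniveau spectral sequence (whose neighbouring $E_2$-terms $H^5(X,\mathscr{H}^3_{\et})$ and $H^1(X,\mathscr{H}^5_{\et})$ already vanish) forces $H^3(X,\mathscr{H}^4_{\et}(\Z/2))=0$. Thus $\mathfrak{o}_3=0$ if and only if $r(\mathfrak{o}_3)=0$ in $CH^3(X)/2$.

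It then remains to identify $r(\mathfrak{o}_3)$ with $Sq^2 c_2 + c_1\cup c_2$; this is the step I expect to be the main obstacle. Here $r(\mathfrak{o}_3)=f^*\bar{\kappa}$, where $\bar{\kappa}$ is the mod-$\mathbf{I}^4$ reduction of the first $k$-invariant of the tower, a cohomology operation $CH^1\times CH^2 \to CH^3/2$. I would first cut down the possibilities: evaluating on the (algebraic, hence unobstructed) split bundles $\L_1\oplus\L_2$ forces $\bar{\kappa}$ to vanish on the universal pair $(a+b,ab)$ over products of projective spaces; using the instability relations $Sq^2 x = x^2$ and $Sq^1 x = 0$ for $x\in CH^1/2$ together with the Cartan formula gives $Sq^2(ab)=ab(a+b)$, which already kills any pure-$c_1$ term and shows $\bar{\kappa}\in\{0,\,Sq^2(\cdot)+c_1\cup(\cdot)\}$. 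To see that the nonzero alternative occurs, I would pin down the single remaining coefficient in $\Z/2$ by the structure of the motivic Steenrod algebra --- over an algebraically closed field ($\rho=0$) the bidegree-$(2,1)$ mod-$2$ operations on $H^{4,2}$ are spanned by $Sq^2$ --- and fix the scalar to be $1$ by complex realization, matching the classical nonvanishing first $k$-invariant $Sq^2\iota_4+\iota_2\iota_4$ of $BU(2)\to K(\Z,2)\times K(\Z,4)$ (in positive characteristic by a base-change/rigidity argument reducing the universal coefficient to the characteristic-zero computation). This yields $r(\mathfrak{o}_3)=Sq^2 c_2 + c_1\cup c_2$.

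Finally I would dispose of the secondary obstruction by proving the vanishing theorem $H^4(X,\bpi_2^{\aone}(SL_2)(\L))=0$. Using the Rost--Schmid complex, this top-degree group is a quotient of the fourth contraction of $\bpi_2^{\aone}(SL_2)$ evaluated at the residue fields of the closed points of $X$, all of which equal the algebraically closed base field; feeding in the explicit computation of $\bpi_2^{\aone}(SL_2)$ and the vanishing of the relevant Milnor--Witt contributions over $\bar{k}$ shows this quotient is zero. Granting this, both directions follow: if a bundle with $(\det,c_2)=(\L,c_2)$ exists, its classifying map is a lift, so $\mathfrak{o}_3=0$ and hence $Sq^2 c_2 + c_1\cup c_2 = r(\mathfrak{o}_3)=0$; conversely $Sq^2 c_2 + c_1\cup c_2 = 0$ gives $\mathfrak{o}_3=0$ by injectivity of $r$, the vanishing theorem kills $\mathfrak{o}_4$, so a lift exists, and Morel's theorem converts it into the desired rank $2$ bundle. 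The principal technical risks are the identification of $\bar{\kappa}$ with $Sq^2(\cdot)+c_1\cup(\cdot)$ (fixing the coefficient honestly in all characteristics) and the contraction computation underlying the vanishing theorem.
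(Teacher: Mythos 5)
Your proposal follows the same architecture as the paper's proof: Moore--Postnikov factorization of $(c_1',c_2')$, a primary obstruction in $H^3(X,\mathbf{I}^3(\L))$ compared with its image in $CH^3(X)/2$ via the vanishing of $H^3(X,\mathbf{I}^4(\L))$, vanishing of the secondary obstruction group in degree $4$, cohomological dimension to kill all higher stages, and Morel's representability theorem to geometrize. Your Bloch--Ogus/Artin-vanishing argument for $H^3(X,\mathbf{I}^4(\L))=0$ is a reasonable stand-in for the paper's citation. However, there are two genuine gaps. The first is the identification of the reduced $k$-invariant with $Sq^2(\cdot)+c_1\cup(\cdot)$, which you correctly flag as the main obstacle but do not actually prove. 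The constraint from split bundles only tells you the operation vanishes on pairs of the form $(a+b,ab)$, and the assertion that the remaining possibilities are spanned by $Sq^2$ would require a classification of \emph{natural transformations} $CH^1\times CH^2\to CH^3/2$ on smooth schemes --- equivalently a computation of the (twisted) $\K^M_3/2$-cohomology of $K(\K^M_1,1)\times K(\K^M_2,2)$ --- not merely of the stable motivic Steenrod algebra in bidegree $(2,1)$: the boundary map of $0\to\mathbf{I}^3(\L)\to\K^{MW}_2(\L)\to\K^M_2\to 0$ is a priori an unstable, possibly non-additive operation. The paper does not attempt this classification; it quotes \cite[Theorem 4.1.4]{AsokFaselSecondary}, a separate result generalizing Totaro's theorem \cite{TotaroWitt}, which computes this boundary map directly. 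Your complex-realization normalization is a sensible consistency check but cannot substitute for that computation.

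The second gap is in the secondary obstruction. You claim $H^4(X,\bpi_2^{\aone}(SL_2)(\L))=0$ because the top-degree Rost--Schmid group is a quotient of fourth contractions evaluated at residue fields of closed points, all of which vanish over $\bar{k}$. That is false for part of the sheaf: by \cite[Theorem 3.3]{AsokFaselThreefolds}, $\bpi_2^{\aone}(SL_2)\cong\bpi_3^{\aone}(BSp_2)$ is built from $\mathbf{GW}^2_3$ together with a sheaf $\mathbf{T}'_4$ having $\K^M_4/12$ as a subquotient, and $(\K^M_4/12)_{-4}(\bar{k})=\Z/12\neq 0$. The contraction argument disposes only of the $\mathbf{GW}^2_3$ piece; for the $\K^M_4/12$ piece one has $H^4(X,\K^M_4/12)\cong CH^4(X)/12$, and the vanishing there requires the divisibility of $CH^4(X)$ for a smooth affine $4$-fold over an algebraically closed field (with $\mathbf{I}^5=0$ handling the remaining piece). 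So the conclusion is correct, but your stated reason fails, and the hypothesis that the base field is algebraically closed enters in an essentially different way than you indicate.
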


\begin{proof}
Suppose $X$ is a smooth affine $4$-fold over $\Spec k$ and fix $(c_1,c_2) \in CH^1(X) \times CH^2(X)$ as in the statement.  Such a pair determines a map $\psi: X \to K(\K^M_1,1) \times \K(\K^M_2,2)$.  Fix a line bundle ${\mathcal L}$ on $X$ representing $c_1$.  We now analyze the Moore--Postnikov factorization of the map $(c_1,c_2'): BGL_2 \to K(\K^M_1,1) \times K(\K^M_2,2)$.

This analysis proceeds in several steps, which we now outline.  First, we analyze the primary obstruction: we begin by constructing a ``quotient" of the primary obstruction, which is easier to identify.  Then, we show that this ``quotient" of the primary obstruction actually coincides with the primary obstruction by means of suitable cohomological vanishing statements.  Next, we analyze the secondary obstruction.  We show the secondary obstruction vanishes again by establishing a general cohomological vanishing result.  Finally, we appeal to general theory to show that no further obstructions can arise.

{\bf Step 1: Analyzing the primary obstruction.}  The primary obstruction to lifting $\psi$ to a map $X \to BGL_2$ is, by means of Proposition \ref{prop:relevanthomotopysheaves}, a class in $H^3(X,\mathbf{I}^3({\mathcal L}))$.
\newline

{\bf Step 1a: A quotient of the primary obstruction.}  To identify this obstruction more explicitly, we consider the exact sequence of \cite[\S 2.1 p. 423]{FaselIJ}:
\[
0 \longrightarrow \mathbf{I}^{j+1}(\L) \longrightarrow \mathbf{I}^j(\L) \longrightarrow \mathbf{I}^j/\mathbf{I}^{j+1} \longrightarrow 0.
\]
By the sheafified version of the Milnor conjecture on quadratic forms \cite{OVV} there is a canonical isomorphism of sheaves $\mathbf{I}^j/\mathbf{I}^{j+1} \cong \K^M_j/2$.  We consider the composite map (via this isomorphism)
\[
H^2(X,\K^M_2) \longrightarrow H^3(X,\mathbf{I}^3(\L)) \longrightarrow H^3(X,\K^M_3/2).
\]
The first map here is precisely the $k$-invariant in the Moore--Postnikov factorization and the identifications of Proposition \ref{prop:relevanthomotopysheaves} show that it is exactly the connecting homomorphism in the long exact sequence in cohomology associated with the exact sequence of sheaves on $X$:
\[
0 \longrightarrow \mathbf{I}^3({\mathcal L}) \longrightarrow \K^{MW}_2({\mathcal L}) \longrightarrow \K^M_2 \longrightarrow 0.
\]
The composite above can be described in a fashion that extends a result of Totaro.  More precisely, we claim that if $c_2 \in H^2(X,\K^M_2)$, then the map just described
\[
H^2(X,\K^M_2) \longrightarrow H^3(X,\K^M_3/2)
\]
sends $c_2 \mapsto Sq^2 c_2+ c_1(\L) \cup c_2$.  First, we treat the case where $\mathcal{L}$ is trivial.  In that case, the map $H^2(X,\K^M_2) \longrightarrow H^3(X,\K^M_3/2)$ factors through the mod $2$ reduction map $CH^2(X) = H^2(X,\K^M_2) \to H^2(X,\K^M_2/2) = CH^2(X)/2$ (see \cite[Diagram 2.1.2]{AsokFaselSecondary} and the preceding discussion).  Therefore, \cite[Theorem 1.1]{TotaroWitt} implies that the composite map is precisely the composite of the mod $2$ reduction map and $Sq^2$, i.e., the integral Steenrod squaring map.  To treat the general case, we appeal to \cite[Theorem 3.4.1]{AsokFaselSecondary} (applied in the case $i = j = 2$), which reduces the description of the relevant operation when $\mathcal{L}$ is non-trivial to the case where $\mathcal{L}$ is trivial.\newline

{\bf Step 1b: the actual primary obstruction.}  The relationship between the obstruction just mentioned and the {\em actual} primary obstruction is measured by the difference between the groups $H^3(X,\mathbf{I}^3(\L))$ and $H^3(X,\K^M_3/2)$.  In this case, we have the long exact sequence
\[
\cdots \longrightarrow H^3(X,\mathbf{I}^4(\L)) \longrightarrow H^3(X,\mathbf{I}^3(\L)) \longrightarrow H^3(X,\K^M_3/2) \longrightarrow H^4(X,\mathbf{I}^4(\L)) \longrightarrow \cdots.
\]
Since $X$ is a smooth affine $4$-fold over an algebraically closed field, by \cite[Proposition 5.2]{AsokFaselThreefolds} the groups $H^3(X,\mathbf{I}^4(\L))$ and $H^4(X,\mathbf{I}^4(\L))$ vanish.  Thus, the primary obstruction coincides with the ``quotient" constructed in Step 1a, i.e., the primary obstruction to lifting is precisely the vanishing of $Sq^2 c_2 + c_1 \cup c_2$. \newline

{\bf Step 2: Analyzing the secondary obstruction.}  If the primary obstruction vanishes, then we can choose a lift to the second stage of the Moore--Postnikov factorization of the map $(c_1,c_2')$.  Upon choosing a lift to this stage, we obtain a secondary obstruction lying in $H^4(X,\bpi_3^{\aone}({\mathscr F}_2)(\L))$.  By means of Proposition \ref{prop:relevanthomotopysheaves}, this obstruction is an element of $H^4(X,\bpi_3^{\aone}(BGL_2)(\L))$.  A priori, this obstruction depends on the choice of lift, but we claim $H^4(X,\bpi_3^{\aone}(BGL_2)(\L)) = 0$, independent of this choice.

To see this, recall by \cite[Theorem 3.3]{AsokFaselThreefolds} that $\bpi_3^{\aone}(BSp_2)$ is an extension of $\K^{Sp}_3=\mathbf{GW}^2_3$ by a certain sheaf $\mathbf{T}'_4$.  Observe first that $H^4(X,\mathbf{GW}^2_3(\L)) = 0$ by explicit construction of the Gersten resolution.  Indeed, $H^4(X,\mathbf{GW}^2_3(\L))$ is simply a quotient of $\bigoplus_{x \in X^{(4)}} (\mathbf{GW}^2_3(\L))_{-4}(\kappa_x)$ and the latter vanishes by \cite[Lemma 4.11]{AsokFaselThreefolds}.

By \cite[Corollary 4.9]{AsokFaselThreefolds} the induced $\gm{}$-action on $\mathbf{T}'_4$ is trivial.  Again using \cite[Theorem 3.3]{AsokFaselThreefolds}, $\mathbf{S}'_4$ is a quotient of $\mathbf{T}'_{4}$ by $\mathbf{D}_5$; the sheaf $\mathbf{S}'_4$ is itself a quotient of $\K^M_4/12$ while the sheaf $\mathbf{D}_5$ is a quotient of $\mathbf{I}^5$.  Observe that $H^4(X,\K^M_4/12) \cong CH^4(X)/12$.  Since $X$ is smooth and affine over an algebraically closed field, $CH^4(X)$ is divisible and therefore, $H^4(X,\K^M_4/12)$ is trivial.  On the other hand, $H^4(X,\mathbf{I}^5)$ is trivial since \cite[Proposition 5.1]{AsokFaselThreefolds} shows the sheaf $\mathbf{I}^5$ is itself trivial when restricted to $X$.  \newline

{\bf Step 3. Lifting to $BGL_2$.} Since the secondary obstruction vanishes, we may choose a lift to the third stage of the Moore--Postnikov factorization of the map.  Since $X$ has Nisnevich cohomological dimension $4$, there are no further obstructions to lifting and we may arbitrarily make a choice of lift of $X$ to the $4$-th stage of the Moore--Postnikov factorization of the map $(c_1,c_2')$.  Moreover, again using the fact that $X$ has Nisnevich cohomological dimension $4$ we see that beyond the $4$-th stage of the factorization all lifts are uniquely determined.  Thus, by the same argument as \cite[Proposition 6.2]{AsokFaselThreefolds}, we obtain an element of $[X,BGL_2]_{\aone}$.  Combining the discussion of the previous steps we see that a necessary and sufficient condition to lift $(c_1,c_2) \in CH^1(X) \times CH^2(X)$ to an element of $[X,BGL_2]_{\aone}$ is the vanishing of $Sq^2 c_2 + c_1 \cup c_2 \in CH^3(X)/2$. \newline

{\bf Step 4. Geometrization.}  Finally, we apply Morel's $\aone$-representability theorem for vector bundles \cite[Theorem 1]{AsokHoyoisWendt} to identify $[X,BGL_2]_{\aone}$ with $\mathscr{V}_2(X)$.
\end{proof}

\begin{proof}[Proof of Theorem \ref{thmintro:maintheorem}]
Consider the map
\[
(c_1^{top},c_2^{top}) : BGL_2(\cplx) \longrightarrow K(\Z,2) \times K(\Z,4).
\]
The map $(c_1^{top},c_2^{top})$ is a $4$-equivalence and thus the homotopy fiber is $5$-connected.  If $X$ is a smooth affine $4$-fold, then $X^{an}$ has the homotopy type of a CW complex of dimension $\leq 4$.  A straightforward obstruction theory argument then shows that there is a bijection $\mathscr{V}_2^{top}(X) \isomt H^2(X^{an},\Z) \times H^4(X^{an},\Z)$.  Combining this observation with Theorem \ref{thm:main} yields Theorem \ref{thmintro:maintheorem}.
\end{proof}

\begin{rem}
It is possible to establish Theorem \ref{thmintro:maintheorem} by a direct analysis of the Moore--Postnikov factorization of $(c_1,c_2): BGL_2 \to K(\Z(1),2) \times K(\Z(2),4)$, but identifying the $\aone$-homotopy sheaves of the $\aone$-homotopy fiber is slightly more complicated than the approach described above.
\end{rem}

\section{Constructing examples: proof of Theorem \ref{thmintro:mainexample}}
The goal of this section is to construct explicit examples of smooth complex affine varieties for which (a) the cycle class map $CH^i(X) \to H^{2i}(X,\Z)$ is injective for $i \leq 2$ and (b) there exist classes $\alpha \in CH^2(X)$ such that $Sq^2 \alpha \neq 0$.

\subsection{Explicit examples of non-algebraizable vector bundles}
If $\dim X < 3$, $CH^3(X)$ is trivial by definition and if $\dim X = 3$ then $CH^3(X)$ is divisible (which is classical); thus, if $\dim X \leq 3$, then $CH^3(X)/2$ is trivial.  Therefore the first dimension that can support non-trivial examples of the kind we envision is dimension $4$.

\begin{prop}
\label{prop:cycleclassisomorphisminlowdegrees}
Suppose $Y$ is a smooth projective variety of dimension $\geq 4$ and $Z$ is an ample hypersurface on $Y$ (i.e., $\O_Y(Z)$ is an ample line bundle) and set $X:= Y \setminus Z$.  If the cycle class map $CH^i(Y) \to H^{2i}(Y^{an},\Z)$ is an isomorphism for $i\leq 2$, then the cycle class maps $CH^i(X) \to H^{2i}(X^{an},\Z)$ are injective for $i \leq 2$.
\end{prop}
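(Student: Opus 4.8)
The plan is to compare the localization sequence in Chow groups with the Gysin sequence in singular cohomology, linked degreewise by the cycle class map. Since $\O_Y(Z)$ is ample, $X = Y \setminus Z$ is smooth affine and $Z$ is a smooth ample divisor; write $\iota\colon Z \hookrightarrow Y$ for the closed immersion and $j\colon X \hookrightarrow Y$ for the open complement. For each $i$ I would assemble the commutative ladder whose top row is the right-exact localization sequence
\[
CH^{i-1}(Z) \xrightarrow{\iota_*} CH^i(Y) \xrightarrow{j^*} CH^i(X) \longrightarrow 0
\]
and whose bottom row is the exact Gysin sequence
\[
H^{2i-2}(Z,\Z) \xrightarrow{\iota_*} H^{2i}(Y,\Z) \xrightarrow{j^*} H^{2i}(X,\Z) \xrightarrow{\partial} H^{2i-1}(Z,\Z) \xrightarrow{\iota_*} H^{2i+1}(Y,\Z),
\]
the vertical maps being cycle class maps, with the middle map $\mathrm{cl}_Y\colon CH^i(Y) \to H^{2i}(Y,\Z)$ an isomorphism for $i \le 2$ by hypothesis.

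First I would control the cycle class map of $Z$ in degree $i-1$. By the Lefschetz hyperplane theorem for the ample divisor $Z$, restriction $H^k(Y,\Z) \to H^k(Z,\Z)$ is an isomorphism for $k \le \dim Y - 2$, which covers $k = 2i-2$ when $i \le 2$ and $\dim Y \ge 4$; together with the Grothendieck--Lefschetz isomorphism $\Pic(Y) \xrightarrow{\sim} \Pic(Z)$ this gives $\iota^*\colon CH^{i-1}(Y) \xrightarrow{\sim} CH^{i-1}(Z)$. Since $\mathrm{cl}_Y$ is surjective by hypothesis, it follows that $\mathrm{cl}_Z\colon CH^{i-1}(Z) \to H^{2i-2}(Z,\Z)$ is surjective for $i \le 2$, and the projection formula $\iota_*\iota^* = (-)\cup[Z]$ makes the left-hand square commute compatibly.

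The main obstacle is the surjectivity of $\mathrm{cl}_X$. Because $\mathrm{cl}_Y$ and $j^*$ are surjective, the image of $\mathrm{cl}_X$ equals the image of $j^*\colon H^{2i}(Y,\Z) \to H^{2i}(X,\Z)$, so surjectivity of $\mathrm{cl}_X$ is equivalent to the vanishing of the connecting map $\partial$, hence to injectivity of the Gysin map $\iota_*\colon H^{2i-1}(Z,\Z) \to H^{2i+1}(Y,\Z)$ on the \emph{odd} cohomology of $Z$. I would establish this through hard Lefschetz: the composite $\iota^*\iota_*\colon H^{2i-1}(Z) \to H^{2i+1}(Z)$ is cup product with the ample class $c_1(\O_Z(Z))$, which is injective on $H^k(Z)$ for $k \le \dim Z - 1 = \dim Y - 2$, forcing $\iota_*$ itself to be injective on $H^{2i-1}(Z)$ in that range. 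This is the genuinely delicate step: it is exactly here that the interplay between $\dim Y$ and the degree $i$ enters ($2i-1 \le \dim Y - 2$), and where one must show that the odd cohomology of $Z$ does not obstruct descent of classes from $Y$.

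Once $\partial = 0$ in the relevant degree, the bottom row truncates to a right-exact sequence of the same shape as the Chow row, and the conclusion follows by comparing cokernels. The middle vertical map is an isomorphism and the left one ($\mathrm{cl}_Z$ in degree $i-1$) is surjective, so the induced map on cokernels, namely $\mathrm{cl}_X\colon CH^i(X) \to H^{2i}(X,\Z)$, is an isomorphism: surjectivity is immediate from surjectivity of $j^*$ and $\mathrm{cl}_Y$, while injectivity follows because any algebraic class dying in $H^{2i}(X,\Z)$ lifts to $\iota_*$ of an algebraic class on $Z$, using that $\mathrm{cl}_Z$ is onto. Running this for $i = 0,1,2$ yields the claim in the stated range.
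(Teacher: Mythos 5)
Your ladder and the concluding diagram chase are the same as the paper's, and your injectivity argument --- lift a class killed by $\mathrm{cl}_X$ to $Y$, use exactness of the Gysin sequence at $H^{2i}(Y,\Z)$ together with surjectivity of $\mathrm{cl}_Z$ in degree $i-1$ to write its image as $\iota_*$ of an algebraic class, and conclude by injectivity of $\mathrm{cl}_Y$ --- is correct. You have also correctly isolated the one step that is not formal: the paper simply writes the bottom row of its diagram as ending in $0$, i.e.\ takes for granted that $j^*\colon H^{2i}(Y,\Z)\to H^{2i}(X,\Z)$ is onto, and, as you say, surjectivity of $\mathrm{cl}_X$ is \emph{equivalent} to this, hence to injectivity of $\iota_*\colon H^{2i-1}(Z,\Z)\to H^{2i+1}(Y,\Z)$.

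The problem is that your justification of this step does not cover the stated hypotheses. Hard Lefschetz makes $\cup\,c_1(\O_Z(Z))\colon H^{k}(Z,\Q)\to H^{k+2}(Z,\Q)$ injective only for $k\le \dim Z-1$; for $k=\dim Z$ its kernel is the primitive middle cohomology. Your condition $2i-1\le \dim Y-2$ therefore requires $\dim Y\ge 5$ when $i=2$, while the proposition assumes only $\dim Y\ge 4$, and $\dim Y=4$, $i=2$ is exactly the case needed for Theorem \ref{thm:mainexample}. In that case the step is not merely unproven but false: for $Y=\pone\times\Proj^3$ and $Z$ smooth of bidegree $(3,4)$ one has $H^5(Y,\Z)=0$ while $H^3(Z,\Z)$ has positive rank (already $h^{3,0}(Z)=h^0(Z,K_Z)=h^0(Z,\O_Z(1,0))=2$), so $\partial\colon H^4(X,\Z)\to H^3(Z,\Z)$ is onto rather than zero, $H^4(X,\Z)$ has positive rank, and the finite group $CH^2(X)\cong\Z/16$ (Proposition \ref{prop:restrictiondegree2nontrivial}) cannot surject onto it. So no argument can close this gap: the surjectivity half of the conclusion fails for $i=2$ in the paper's own example (the paper's proof, which asserts right-exactness of the bottom row without comment, shares this defect), and only the injectivity half --- the part your last paragraph proves correctly, and the part the application to non-algebraizability actually uses --- survives. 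A secondary point: even in the range where hard Lefschetz does apply, it is a statement over $\Q$, so integral torsion in $H^{2i-1}(Z,\Z)$ would need separate treatment.
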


\begin{proof}

Using the assumption on the dimension and the hypothesis that $Z$ is an ample hypersurface, the Grothendieck--Lefschetz theorem guarantees that the pullback map $Pic(Y) \to Pic(Z)$ is an isomorphism \cite[Corollary IV.3.3]{HartshorneAmple}.  By the usual Lefschetz hyperplane theorem \cite[Theorem 1.1]{VoisinHodgeTheoryII}, the map $H^i(Y^{an},\Z) \to H^i(Z^{an},\Z)$ is an isomorphism for $i \leq 2$ and injective for $i = 3$.  Since the cycle class map commutes with pullbacks \cite[Proposition 9.2.1(i)]{VoisinHodgeTheoryII} and since the map $Pic(Y) \to H^2(Y^{an},\Z)$ is an isomorphism by assumption, we conclude that $Pic(Z) \to H^2(Z^{an},\Z)$ is an isomorphism as well.

Since the cycle class map commutes with Gysin maps \cite[Proposition 9.2.1(ii)]{VoisinHodgeTheoryII}, there is a commutative diagram of the form
\[
\xymatrix{
CH^{i-1}(Z) \ar[r]\ar[d]& CH^{i}(Y) \ar[r]\ar[d] & CH^{i}(X)\ar[r]\ar[d] & 0 \ar[d] \\
H^{2i-2}(Z^{an},\Z) \ar[r] & H^{2i}(Y^{an},\Z) \ar[r] & H^{2i}(X^{an},\Z) \ar[r] & H^{2i-1}(Z^{an},\Z)
}
\]
The  map $cl: CH^{i-1}(Z) \to H^{2i-2}(Z^{an},\Z)$ is an isomorphism for $i = 1$ essentially by definition and an isomorphism for $i = 2$ by the discussion of the previous paragraph.  The map $cl: CH^i(Y) \to H^{2i}(Y^{an},\Z)$ is an isomorphism for $i \leq 2$ by assumption.  Since the right vertical map is injective, it follows from the five lemma that $CH^i(X) \to H^{2i}(X^{an},\Z)$ is injective for $i \leq 2$ as well.
\end{proof}

\begin{rem}
If $Y$ is $1$-connected, it follows from the Lefschetz theorem on the fundamental group (see, e.g., \cite[Corollary IV.2.2]{HartshorneAmple}) that $Z$ is also $1$-connected.  In that case, one concludes that $Pic(X) \to H^{2}(X^{an},\Z)$ is an isomorphism as well since $H^1(Z^{an},\Z) = 0$ by the Hurewicz theorem.  One way to guarantee that $Y$ is simply connected and that the cycle class map $CH^i(Y) \to H^{2i}(Y^{an},\Z)$ is an isomorphism is to require that $Y$ admit a cellular decomposition in the sense of, e.g., \cite[Example 1.9.1]{Fulton}.  Recall that a variety $Y$ is said to admit a cellular decomposition if it admits an increasing filtration by closed subvarieties $\emptyset = Y_{-1} \subset \cdots \subset Y_i \subset Y_{i+1} \subset \cdots \subset Y_n = Y$ such that $Y_i \setminus Y_{i-1}$ can be written as a disjoint union of varieties isomorphic to affine space.  In that case, $Y$ is a rational variety and $Y^{an}$ is simply connected, e.g., by \cite{Serreunirational}.  In the examples we consider below, $Y$ will admit a cellular decomposition.

If $Y$ admits a cellular decomposition, then the cohomology of $Y^{an}$ is concentrated in even degrees.  In particular, $H^5(Y^{an},\Z) = 0$ and therefore the map $H^4(X^{an},\Z) \to H^3(Z^{an},\Z)$ in the Gysin sequence is surjective.  By taking $Z$ to be a sufficiently ample hypersurface, we can guarantee that $H^3(Z^{an},\Z)$ is non-zero in general (e.g., take a smooth hypersurface of degree $\geq 5$ in ${\mathbb P}^4$).  Thus, in general, $CH^2(X) \to H^4(X^{an},\Z)$ need not be surjective.
\end{rem}

Fix an isomorphism $CH^*(\pone \times {\mathbb P}^3) \cong \Z[\xi,\tau]/\langle \xi^2,\mu^4 \rangle$ (here $\xi$ and $\mu$ are elements of degree $1$ in the Chow ring).  If $Z$ is a smooth hypersurface of bidegree $(d_1,d_2)$, then under this isomorphism $[Z] = d_1 \xi + d_2 \mu$.  We now identify the Chow groups of the complement of $Z$ in ${\pone} \times {\mathbb P}^3$ in low degrees; we do this since the computation is concrete and elementary (the full strength of this statement is not necessary in Theorem \ref{thm:mainexample} below).

\begin{prop}
\label{prop:restrictiondegree2nontrivial}
Suppose $Z \subset {\pone} \times {\mathbb P}^3$ is a hypersurface of bidegree $(d_1,d_2)$ with $d_1 \neq 0, d_2 \neq 0$.  Set $g = gcd(d_1,d_2)$ and pick $m$ and $n$ such that $md_1 + nd_2 = g$.  If $X:= {\pone} \times {\mathbb P}^3 \setminus Z$, then
\[
\begin{split}
CH^1(X) &\cong \Z/d_1\Z \oplus \Z/d_2\Z, \text{ and } \\
CH^2(X) &\cong \Z/g\Z \oplus \Z/{\scriptstyle \frac{d_2^2}{g}}\Z.
\end{split}
\]
Under the first isomorphism $\Z/d_1\Z$ is generated by the image of $\xi$ while $\Z/d_2\Z$ is generated by the image of $\mu$.  Under the second isomorphism, the class $\xi \mu$ is sent to $(1,-\frac{md_2}{g})$, while the class $\mu^2$ is sent to a generator of $\Z/{\scriptstyle \frac{d_2^2}{g}}\Z$.  If $d_1 \nmid d_2$ and $d_2 \nmid d_1$ (e.g., if $gcd(d_1,d_2) = 1$), then $\xi\mu$ can be assumed to restrict non-trivially to $CH^2(X)$.
\end{prop}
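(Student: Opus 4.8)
The plan is to compute both groups from the localization (excision) sequence attached to the closed immersion $i\colon Z \hookrightarrow Y := \pone \times {\mathbb P}^3$ with open complement $X$. For each $i$ this is the right-exact sequence
\[
CH^{i-1}(Z) \xrightarrow{\ i_*\ } CH^i(Y) \longrightarrow CH^i(X) \longrightarrow 0,
\]
so that $CH^i(X) = \coker(i_*)$. The whole computation then reduces to (i) identifying the source $CH^{i-1}(Z)$, (ii) computing the pushforward $i_*$, and (iii) reading off the cokernel together with its generators by Smith normal form.

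For $CH^1(X)$ I take $i = 1$. Since $Z$ is a smooth ample hypersurface on an irreducible variety of dimension $\geq 2$, it is connected, so $CH^0(Z) = \Z$ is generated by the fundamental class, and $i_*$ carries this generator to $[Z] = d_1\xi + d_2\tau \in CH^1(Y) = \Z\xi \oplus \Z\tau$. Thus $CH^1(X)$ is the cokernel of the map $\Z \to \Z^2$ sending $1 \mapsto (d_1,d_2)$, which I would resolve by Smith normal form, using the B\'ezout relation $md_1 + nd_2 = g$ to produce the change of basis; this simultaneously identifies $CH^1(X)$ and records the images of the generators $\xi$ and $\tau$.

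The substantive step is $CH^2(X)$, where $i = 2$ forces me to understand $CH^1(Z) = \Pic(Z)$. Here the key input is the Grothendieck--Lefschetz theorem, exactly as used in the proof of Proposition \ref{prop:cycleclassisomorphisminlowdegrees}: because $\dim Z = 3$ and $Z$ is ample in a fourfold, the restriction $i^*\colon \Pic(Y) \to \Pic(Z)$ is an isomorphism, so $CH^1(Z) = i^*CH^1(Y)$. The projection formula $i_*i^*\alpha = [Z]\cdot \alpha$ then identifies the image of $i_*$ with the subgroup $[Z]\cdot CH^1(Y) \subseteq CH^2(Y)$. Using $\xi^2 = 0$, this subgroup is generated by
\[
(d_1\xi + d_2\tau)\xi = d_2\,\xi\tau \quad\text{and}\quad (d_1\xi + d_2\tau)\tau = d_1\,\xi\tau + d_2\,\tau^2
\]
inside $CH^2(Y) = \Z\xi\tau \oplus \Z\tau^2$. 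Putting the resulting relation matrix $\left(\begin{smallmatrix} d_2 & d_1 \\ 0 & d_2 \end{smallmatrix}\right)$ into Smith normal form yields invariant factors $g$ and $d_2^2/g$ (note $\det = d_2^2$ while the gcd of the entries is $g$), and tracking the change of basis through the B\'ezout relation produces the stated images of $\xi\tau$ and $\tau^2$.

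For the final assertion I would argue directly from the presentation $CH^2(X) = \Z^2/\langle (d_2,0),(d_1,d_2)\rangle$ in the coordinates $(\xi\tau,\tau^2)$: the class $\xi\tau = (1,0)$ is trivial if and only if $(1,0) = a(d_2,0) + b(d_1,d_2)$ for integers $a,b$, whose second coordinate $bd_2 = 0$ forces $b = 0$ and hence $d_2 = \pm 1$. The hypothesis $d_2 \nmid d_1$ rules out $|d_2| = 1$, so $\xi\tau$ restricts non-trivially. The main obstacle is the $CH^2$ step, and within it the identification $CH^1(Z) = i^*\Pic(Y)$: it is precisely Grothendieck--Lefschetz that guarantees $\Pic(Z)$ contributes nothing beyond classes restricted from $Y$, which is what allows the projection formula to compute the full image of $i_*$. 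Once that is secured, everything that remains is linear-algebra bookkeeping.
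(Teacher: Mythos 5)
Your proposal is correct and follows essentially the same route as the paper's proof: the localization sequence $CH^{j-1}(Z)\to CH^j(Y)\to CH^j(X)\to 0$, the Grothendieck--Lefschetz identification $CH^1(Z)\cong i^*\Pic(Y)$, the projection formula reducing the image of $i_*$ to $[Z]\cdot CH^{j-1}(Y)$ with $[Z]=d_1\xi+d_2\tau$, and the Smith normal form of $\left(\begin{smallmatrix} d_2 & d_1\\ 0 & d_2\end{smallmatrix}\right)$; your direct cokernel argument for the non-vanishing of $\xi\tau$ (forcing $b=0$ and hence $d_2=\pm 1$) is if anything cleaner than the paper's appeal to the B\'ezout identity. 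One caveat you share with the paper rather than introduce yourself: the presentation both of you derive for $CH^1(X)$ is $\Z^2/\langle (d_1,d_2)\rangle\cong \Z\oplus\Z/g\Z$, which is not the $\Z/d_1\Z\oplus\Z/d_2\Z$ asserted in the statement, so the first displayed isomorphism does not actually follow from either argument as written (this discrepancy does not affect the $CH^2$ computation or the non-triviality of $\xi\tau$, which is all that is used later).
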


\begin{proof}
It suffices to identify the map $CH^{j-1}(Z) \to CH^j(\pone \times {\mathbb P}^3)$.  Since $i^*: CH^{j-1}(\pone \times {\mathbb P}^3) \to CH^{j-1}(Z)$ is an isomorphism for $j \leq 2$ as observed in the proof of Proposition \ref{prop:cycleclassisomorphisminlowdegrees}, it remains to compute the cokernel of the map $i^*i_*: CH^{j-1}(\pone \times {\mathbb P}^3) \to CH^j(\pone \times {\mathbb P}^3)$, which comes from the intersection with a divisor formula \cite[Proposition 2.6c]{Fulton}.

By definition $[Z] \in CH^1({\pone} \times {\mathbb P}^3)$ is precisely $d_1 \xi + d_2 \mu$.  The first isomorphism of the statement then follows immediately from this: $CH^1(X) \cong \Z \xi \oplus \Z \mu / \langle d_1 \xi + d_2 \mu \rangle$.  For the second isomorphism we proceed as follows. Consider the matrix representing $i^*i_*$ with respect to the bases $\xi,\mu$ of $CH^1(\pone \times {\mathbb P}^3)$ and $\xi\mu, \mu^2$ of $CH^2(\pone \times {\mathbb P}^3)$.  This matrix is not diagonal, but can be put in Smith normal form via the identity:
\[
\begin{pmatrix}
1 & 0 \\
-\frac{md_2}{g} & 1
\end{pmatrix}
\begin{pmatrix}
d_2 & d_1 \\
0 & d_2
\end{pmatrix}
\begin{pmatrix}
n & -\frac{d_1}{g} \\
m & \frac{d_2}{g}
\end{pmatrix}
= \begin{pmatrix}
g & 0 \\
0 & \frac{d_2^2}{g}
\end{pmatrix}
\]
In particular, the cokernel of the map $i^*i_*$ can be computed from that of $diag(g,\frac{d_2^2}{g})$, which is what was asserted in the statement.  The statements regarding the images of $\xi \mu$ and $\mu^2$ are immediate, and the final statement follows from the choice of B\'ezout identity presenting $gcd(d_1,d_2)$ that arises from the Euclidean algorithm.
\end{proof}

\begin{thm}
\label{thm:mainexample}
Suppose $Z \subset \pone \times {\mathbb P}^3$ is a smooth complex hypersurface of bidegree $(3,4)$ defined over $\overline{\Q}$ that specializes modulo some prime $p$ to the singular hypersurface $y_0^3 x_0^4 + y_0^2y_1 x_1^4 + y_0 y_1^2 x_2^4 + y_1^3 x_3^4$ over $\overline{\mathbb{F}}_p$.  The classes $\xi \mu$ and $\xi \mu^2$ both restrict non-trivially from $\pone \times {\mathbb P}^3$ to $X = (\pone \times {\mathbb P}^3) \setminus Z$ and if $\psi$ is the image of $\xi \mu \in CH^2(X)$, then $Sq^2 \psi \neq 0 \in CH^3(X)/2$.
\end{thm}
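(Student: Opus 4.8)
The plan is to separate a purely formal part — computing the Steenrod operation and reducing to a question about Chow groups of $X$ — from the geometric heart of the matter, which is a degeneration argument over $\overline{\mathbb{F}}_p$. First I would record the non-triviality of $\xi\tau$: since $Z$ has bidegree $(3,4)$ and $\gcd(3,4)=1$, Proposition \ref{prop:restrictiondegree2nontrivial} applies directly and identifies $CH^2(X)\cong\Z/16$ with $\xi\tau$ a nonzero element, which is the first assertion. For the rest I would lean on the localization sequence $CH^2(Z)\xrightarrow{i_*}CH^3(\pone\times{\mathbb P}^3)\to CH^3(X)\to 0$, so that both the non-triviality of $\xi\tau^2$ in $CH^3(X)$ and the value of $Sq^2\psi$ become statements about the cokernel of the Gysin map $i_*$, where $CH^3(\pone\times{\mathbb P}^3)=\Z\{\xi\tau^2,\tau^3\}$ and $[Z]=3\xi+4\tau$.

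Next I would compute the operation formally. Because $\xi$ and $\tau$ are reductions of integral (Picard) classes their motivic Bocksteins vanish, so the Cartan formula degenerates to $Sq^2(\xi\tau)=Sq^2(\xi)\,\tau+\xi\,Sq^2(\tau)=\xi\tau^2$, using $Sq^2(\xi)=\xi^2=0$ and $Sq^2(\tau)=\tau^2$ in $CH^3(\pone\times{\mathbb P}^3)/2$. Naturality of Voevodsky's operations for the open immersion $j\colon X\hookrightarrow\pone\times{\mathbb P}^3$ then governs $Sq^2\psi$ by the restriction of $\xi\tau^2$, so the computation of the obstruction is controlled entirely by the image of $\xi\tau^2$ in $CH^3(X)$. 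Via the localization sequence this lands us on the following concrete problem: determine $i_*CH^2(Z)\subseteq\Z\{\xi\tau^2,\tau^3\}$ precisely enough to see that the class of $\xi\tau^2$ is not swallowed by the $1$-cycles coming from $Z$.

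The crux — and the step I expect to carry essentially all of the difficulty — is this control of $i_*CH^2(Z)$, i.e. of the $1$-cycles on the smooth threefold $Z$, and it is exactly here that the failure of the integral Hodge conjecture enters. My approach would be to choose $Z$ over $\overline{\Q}$ as in the hypothesis and spread it out to a regular model over a discrete valuation ring whose special fibre is the explicit singular hypersurface $Z_0=\{y_0^3x_0^4+y_0^2y_1x_1^4+y_0y_1^2x_2^4+y_1^3x_3^4=0\}$ over $\overline{\mathbb{F}}_p$. One then uses the specialization homomorphism on Chow groups, compatible with the distinguished classes $\xi,\tau$, with the Gysin maps, and with $Sq^2$, to transport the required non-vanishing from the generic fibre to $X_0=(\pone\times{\mathbb P}^3)\setminus Z_0$. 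The payoff of degenerating to this very special ``generalized Fermat'' shape is that the geometry of $Z_0$ (for instance through its fibration over $\pone$ by diagonal quartic surfaces and an explicit resolution) should make the cokernel of $i_*$, and hence the survival of $\xi\tau^2$, computable by hand in characteristic $p$.

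The main obstacle is therefore twofold and concentrated in the special fibre: first, verifying that the specialization map is well defined on the relevant torsion classes and genuinely intertwines $Sq^2$ with its characteristic-$p$ counterpart; and second, and more seriously, carrying out the explicit characteristic-$p$ computation of the $1$-cycles on $Z_0$ (or on a resolution) finely enough to pin down $i_*CH^2(Z_0)$ and thereby detect $\xi\tau^2$. Once that computation is available, combining it with the specialization map and the formal identity for $Sq^2(\xi\tau)$ delivers both $Sq^2\psi\neq 0$ and the non-triviality of $\xi\tau^2$, completing the proof.
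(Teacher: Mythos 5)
Your reduction is the same as the paper's: the Cartan-formula computation $Sq^2(\xi\tau)=\xi\tau^2$ (with $Sq^1\xi=Sq^1\tau=0$), naturality of $Sq^2$ under the open immersion, the localization sequence identifying $CH^3(X)$ with the cokernel of $i_*\colon CH^2(Z)\to CH^3(\pone\times{\mathbb P}^3)$, and Proposition \ref{prop:restrictiondegree2nontrivial} for $\xi\tau$ (your identification $CH^2(X)\cong\Z/16$ with $\xi\tau\mapsto 4$ is correct). The one substantive divergence is at the step you rightly flag as the crux. The paper does not compute $i_*CH^2(Z)$ or carry out any specialization itself: it quotes Totaro's result that, for a $Z$ as in the hypothesis, \emph{every curve on $Z$ has even degree over $\pone$}, which immediately gives $i_*CH^2(Z)\subseteq 2\Z\,\xi\tau^2+\Z\,\tau^3$ and hence the survival of $\xi\tau^2$ in $CH^3(X)/2$. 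Note that this is weaker than what you set out to do — one only needs the parity of the $\xi\tau^2$-coefficient, not the full cokernel of $i_*$ — and your proposed degeneration to the singular fibre $y_0^3x_0^4+y_0^2y_1x_1^4+y_0y_1^2x_2^4+y_1^3x_3^4$ over $\overline{\mathbb F}_p$ is precisely Totaro's method of proving that parity statement, so you would in effect be reproving his theorem; the two obstacles you list (compatibility of specialization, and the characteristic-$p$ cycle computation on $Z_0$) are real and constitute the entire content of that citation, so as written your argument is complete only modulo that external input. A small further simplification available to you: once $\xi\tau^2\neq 0$ in $CH^3(X)/2$, the non-triviality of $\xi\tau$ in $CH^2(X)$ follows for free (a class with non-zero $Sq^2$ of its restriction cannot restrict to zero), so Proposition \ref{prop:restrictiondegree2nontrivial} is not actually needed — the paper remarks on this as well.
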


\begin{proof}
Observe that $Sq^2(\xi \mu) = \xi \mu^2 + \xi^2 \mu = \xi \mu^2$ in $CH^*(\pone \times {\mathbb P}^3)$ by the Cartan formula \cite[Proposition 9.7]{VRed} (note that $Sq^1(\xi) = Sq^1(\mu) = 0$ since $H^{3,1}(W,\Z)$ vanishes for any smooth scheme $W$).  Since motivic Steenrod operations are compatible with pullbacks along morphisms of smooth schemes by construction, the statement that $Sq^2(\psi)$ is non-trivial will follow from the assertion that $\xi \mu$ and $\xi \mu^2$ restrict non-trivially to $X$.  Since $gcd(3,4) = 1$, the fact that $\xi \mu$ restricts non-trivially to $X$ follows immediately from Proposition \ref{prop:restrictiondegree2nontrivial}.  Thus, it remains to show that $\xi \mu^2$ restricts non-trivially to $X$.

To this end, we use an idea of Totaro who showed that every curve in $Z$ has even degree over $\pone$.  Indeed, see \cite[Proof of Theorem 3.1]{Totaro} for this precise statement.  Granted this statement, one deduces immediately that $i_*[C]$ for a $1$-cycle $C$ on $Z$ is of the form $2 r \xi \mu^2 + s \mu^3$.  In particular, $\xi \mu^2$ does not lie in the image of $i_*$ or the corresponding map for Chow groups modulo $2$ and therefore restricts non-trivially to $CH^3(X)/2$. (Note: this last observation reproves the fact that $\xi\mu$ restricts non-trivially to $X$ since it follows immediately from the previous statement that $Sq^2(\psi)$, which is the restriction of $Sq^2(\xi \mu) = \xi \mu^2$ to $X$, is non-trivial.)


\end{proof}

\begin{cor}
\label{cor:mainexample}
Suppose $X$ is a variety as described in the statement of \textup{Theorem \ref{thm:mainexample}}.  For a given $c_2^{top} \in H^4(X^{an},\Z)$, there is a unique topological vector bundle $\mathcal{E}^{an}$ over $X^{an}$ with $c_1^{top}(\mathcal{E}^{an}) = 0$ and $c_2^{top}(\mathcal{E}^{an}) = c_2^{top}$.  If $c_2^{top}$ is the image of $\xi \mu$ under the cycle class map, then the topological vector bundle with Chern classes $(0,c_2^{top})$ has algebraic Chern classes yet fails to be algebraizable.
\end{cor}

\begin{proof}
As discussed in the proof of Theorem \ref{thmintro:maintheorem} at the end of Subsection \ref{ss:identifyingobstructions} the map $(c_1^{top},c_2^{top}): \mathscr{V}_2^{top}(X) \to H^2(X^{an},\Z) \times H^4(X^{an},\Z)$ is a pointed bijection.  Thus, the pair $(0,c_2^{top})$ determines a unique rank $2$ topological vector bundle on $X^{an}$.  In the case where $c_2^{top}$ is the image of $\xi \mu$ under restriction, it follows immediately from Theorem \ref{thm:mainexample} that $Sq^2(\xi\mu) \neq 0 \in CH^3(X)/2$.  Therefore, the claim follows from Theorem \ref{thm:main}.
\end{proof}

\subsection{Conjectures on cycles and genericity of the examples}
\label{ss:noritotaro}
The example of the previous section seems intimately related to the failure of the integral Hodge conjecture.  We explore this connection in greater detail now.

\begin{ex}
\label{ex:trento}
There are examples due to Koll{\'a}r--van Geemen that show the integral Hodge conjecture can fail for hypersurfaces in projective space \cite{Trento} or \cite[Theorem 2]{SouleVoisin}.  Indeed, suppose $Z \subset {\mathbb P}^4$ is a hypersurface of degree $d$.  Assume that for some integer $p$ coprime to $6$, $p^3$ divides $d$.  Then, for a general $Z$, any curve $C \subset Z$ has degree divisible by $p$.  Observe that, in this case, $Pic(Z) \cong \Z$ by the Grothendieck--Lefschetz theorem.  Setting $X := {\mathbb P}^4 \setminus Z$, we can compute $CH^i(X)$ in low degrees.  Identify $CH^*({\mathbb P}^4) = \Z[\xi]/\xi^5$.  Then, $CH^1(X) = \Z/d\Z$ generated by the image of $\xi$ and $CH^2(X) = \Z/d\Z$ generated by the class of $\xi^2$.  Note that $Sq^2(\xi^2) = 2 \xi Sq^2 \xi = 0 \in CH^3(X)/2$.  Take a topological complex vector bundle on $X$ of rank $2$ with Chern classes of the form $(m\xi,a\xi^2)$.  If $m$ is even, all such bundles are necessarily algebraizable.  If $m$ is odd, then $Sq^2(a \xi^2) + m \xi \cup a \xi^2 = am \xi^3$.  If $a$ is also odd, then $am$ is odd and so $am\xi^3$ {\em could} restrict non-trivially to $CH^3(X)/2$.  However, the construction above only shows that $CH^3(X)$ is a quotient of $\Z/n\Z$ where $p|n$.  However, since $p$ is odd, we do not know whether $\xi^3$ restricts non-trivially to $CH^3(X)/2$.
\end{ex}

\begin{ex}
By \cite[Lemma 5.1]{Totaro}, there is a smooth hypersurface $Z \subset {\mathbb P}^4$ of degree $48$ over $\bar{\Q}$ for which the integral Hodge conjecture fails.  The construction of this example is, however, rather involved.  Set $X = {\mathbb P}^4 \setminus Z$.  In this case, $Pic(X) \cong \Z/48\Z$ and $CH^2(X) \cong \Z/48\Z$, generated by $\xi$ and $\xi^2$ in the notation of Example \ref{ex:trento}.  Totaro shows that every curve $C \subset X$ has even degree over $\bar{\Q}$, i.e., the pushforward map $CH^2(Z) \to CH^3(X)$ has image contained in $2\xi^3$.  In particular, in this example $CH^3(X)/2$ is necessarily non-trivial, and $\xi^2$ restricts non-trivially.  As before, if we fix Chern classes $(m \xi,a\xi^2)$ with both $a$ and $m$ odd, then $ma \xi^3$ restricts non-trivially to $CH^3(X)/2$ and we thus obtain more non-algebraizable bundles with algebraic Chern classes.
\end{ex}

Let us recall the following conjecture of Nori, as modified by Totaro.

\begin{conj}[Nori,Totaro]
\label{conj:noritotaro}
If $Y$ is a smooth projective variety, and $Z \subset Y$ is a very general, sufficiently ample hypersurface, then the restriction map $CH^i(Y) \to CH^i(Z)$ is an isomorphism for $i < \dim Z$.
\end{conj}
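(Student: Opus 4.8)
The plan is to reduce the conjecture to a statement about a universal family of hypersurfaces and then feed it through Nori's connectivity theorem, the cohomological engine behind results of this kind. First I would fix a very ample $\O_Y(1)$, pass to a large multiple $\O_Y(d)$, and let $B$ be the open subscheme of $\Proj H^0(Y,\O_Y(d))$ parametrizing smooth hypersurfaces, with universal smooth hypersurface $\mathcal{Z} \to B$. The phrase ``very general'' means we work over a point $b \in B$ lying outside a countable union of proper closed subvarieties; the essential consequence is that any cycle $W$ on $Z = \mathcal{Z}_b$ is already defined over a subfield of $\cplx$ of finite transcendence degree, and hence spreads out to a relative cycle over a subvariety $T \subseteq B$ through $b$. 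This spreading out is the device that lets the geometry of the whole family constrain the cycles on a single very general fibre.

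The cohomological analogue of the conjecture is where one has firm ground. The classical Lefschetz hyperplane theorem (as used already in Proposition \ref{prop:cycleclassisomorphisminlowdegrees}) gives that $H^j(Y,\Z) \to H^j(Z,\Z)$ is an isomorphism for $j < \dim Z$, which directly controls cycle classes only in the range $2i < \dim Z$; to reach the full range one invokes Nori's connectivity theorem for the total space of the family, which forces, for very general $Z$, the image of the cycle class map on $Z$ to consist entirely of classes restricted from $Y$. Thus I would first establish that, cohomologically, every class of a codimension-$i$ cycle on $Z$ with $i < \dim Z$ is the restriction of a class on $Y$, and conversely that a cycle on $Y$ restricting to a cohomologically trivial class on $Z$ is itself cohomologically trivial.

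The crux is then to upgrade these cohomological statements to statements about rational equivalence. For surjectivity I would lift the cohomology class of a given cycle $W$ on $Z$ to an \emph{algebraic} class on $Y$, represent it by a cycle $V$, and reduce to showing that the cohomologically trivial cycle $W - V|_Z$ vanishes in $CH^i(Z)$. This last step is exactly where the ``very general'' hypothesis must be exploited: spreading $W - V|_Z$ out over the family and applying Nori's infinitesimal method, together with the vanishing of Green's infinitesimal invariant in the connectivity range, should force the Abel--Jacobi image, and ultimately the class in $CH^i$, to vanish for the very general member. Injectivity is handled symmetrically, propagating the triviality of a restricted cycle back across the family.

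The hard part, and the reason this remains a conjecture rather than a theorem, is precisely the passage from cohomology to Chow groups. Nori connectivity is a statement about Betti or de Rham cohomology, whereas $CH^i$ is vastly finer: it detects cycle-theoretic phenomena invisible to any Weil cohomology, Mumford's theorem on zero-cycles being the archetypal obstruction. The infinitesimal method controls only the Abel--Jacobi (transcendental) part of the cycle map, and for $i$ approaching $\dim Z$ the gap between this and the full Chow group is exactly what one cannot presently close. The ``very general'' and ``sufficiently ample'' hypotheses are the levers that make the countability and connectivity inputs bite, but bridging the remaining gap throughout the full range $i < \dim Z$ is open in general, with complete results known only for small $i$ or under additional hypotheses on $Y$.
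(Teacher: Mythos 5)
This statement is not proved in the paper, and could not be: it is recorded there as an open conjecture of Nori (in the form modified by Totaro), cited only as motivation for why non-algebraizable bundles with algebraic Chern classes should be ``generic.'' So there is no proof of the paper's to compare yours against, and your proposal should not be read as closing the gap either --- indeed, to your credit, you say so explicitly in your final paragraph.

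As a strategy sketch, what you write is a reasonable account of the standard line of attack (spreading out cycles on a very general member over the parameter space, Nori's connectivity theorem for the universal family, the infinitesimal method controlling the Abel--Jacobi part), and you correctly locate the fundamental obstruction: connectivity statements live in Betti/de Rham cohomology, while $CH^i(Z)$ for $i$ close to $\dim Z$ is far finer than anything a Weil cohomology or intermediate Jacobian can see. That is precisely the step that has no known argument, so the proposal contains a genuine and unavoidable gap at ``should force \dots the class in $CH^i$ to vanish.'' The honest conclusion is that the statement remains conjectural; if you were writing this up, you should present it as a discussion of the conjecture's plausibility and known partial results, not as a proof.
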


Combined with Theorem \ref{thmintro:maintheorem}, Conjecture \ref{conj:noritotaro} suggests that non-algebraizable topological complex vector bundles of rank $2$ should be rather common.

\begin{ex}
Under the hypotheses of Conjecture \ref{conj:noritotaro}, set $\xi = [Z] \in Pic(Y)$ and $X := Y \setminus Z$, which is necessarily affine.  Observe then that if $Z \subset Y$ has dimension $3$, then $Pic(X) = Pic(Y)/\langle \xi \rangle$, $CH^2(X) = CH^2(Y)/ \langle \xi \cup Pic(Y) \rangle$, while $CH^3(X) = CH^3(Y)/\langle \xi \cup CH^2(Y) \rangle$.  Thus, assuming Nori's conjecture, if $Y = {\mathbb P}^4$ then for $d$ sufficiently large, and any sufficiently general hypersurface of degree $d$, then $Pic(X) \cong \Z/d\Z$, $CH^2(X) \cong \Z/d\Z$ and $CH^3(X) \cong \Z/d\Z$ generated by the image of $\xi,\xi^2,\xi^3$.  If $d$ is even, then we expect only those topological complex vector bundles of rank $2$ whose Chern classes are of the form $(m\xi,a\xi^2)$ with $a$ and $m$ both odd to be non-algebraizable.
\end{ex}

\bibliographystyle{alpha}
\bibliography{algebraicity}

\end{document}